\definecolor{my-blue}{cmyk}{1,0.6,0,0}
\definecolor{my-green}{cmyk}{0.8,0,1,0.5}
\newcommand\FF{{\mathbb F}}
\newcommand\NN{{\mathbb N}}
\newcommand{\Fp}{\FF_{\!p}}
\newcommand\fH{{\mathfrak H}}
\newcommand\fT{{\mathfrak T}}
\newcommand\m{{\mathfrak m}}
\newcommand{\G}{\mathcal{G}}
\renewcommand{\H}{\mathcal{H}}
\newcommand{\id}{\mathrm{id}}
\newcommand{\trdeg}{\mathrm{trdeg}}
\newcommand{\ps}[1]{[\![#1]\!]}  
\newcommand{\cat}[1]{\mathsf{#1}}
\newcommand{\isom}{\cong}
\renewcommand{\th}[1]{\theta^{(#1)}}
\DeclareMathOperator{\Hom}{Hom}
\DeclareMathOperator{\Aut}{Aut}
\DeclareMathOperator{\uGal}{\underline{Gal}}
\DeclareMathOperator{\Quot}{Quot}
\DeclareMathOperator{\Spec}{Spec}
\def\markdef{\bf }
\theoremstyle{plain}
\newtheorem{thm}{Theorem}[section]
\newtheorem{lem}[thm]{Lemma}
\newtheorem{prop}[thm]{Proposition}
\newtheorem*{mainthm}{Main Theorem}
\theoremstyle{definition}
\newtheorem{defn}[thm]{Definition}
\newtheorem{exmp}[thm]{Example}
\newtheorem{rem}[thm]{Remark}
\begin{document}

\title[Reduced group schemes as ID-Galois groups]{Reduced group schemes as iterative differential Galois groups}
\author{Andreas Maurischat}
\address{\rm {\bf Andreas Maurischat}, Lehrstuhl A f\"ur Mathematik, RWTH Aachen University, Germany }
\email{\sf andreas.maurischat@matha.rwth-aachen.de}

\date{\displaydate{date}}


\keywords{Galois theory, Picard-Vessiot theory, iterative derivations, inverse Galois problem}

\begin{abstract}
This article is on the inverse Galois problem in Galois theory of linear iterative differential equations in positive
characteristic. We show that it has an affirmative answer for reduced algebraic group schemes over any iterative differential field which is finitely generated over its algebraically closed field of constants.

We also introduce the notion of equivalence of iterative derivations on a given field - a condition which implies that the inverse Galois problem over equivalent iterative derivations are equivalent.
\end{abstract}

\maketitle

\setcounter{tocdepth}{1}
\tableofcontents

\section{Introduction}

In differential Galois theory in positive characteristic, classical derivations are replaced by iterative derivations in order to keep the constants ``small'', and to obtain an appropriate Galois theory. For example in characteristic zero, the rational function field $C(t)$ with the derivation $\partial_t=\frac{d}{dt}$ has as constants $C=\{ f\in C(t)\mid \partial_t(f)=0\}$. In positive characteristic $p$, however, the constants would be the subfield $C(t^p)$.
Therefore, one considers the iterative derivation $\theta_t$ instead which is given by a collection of $C$-linear maps $(\th{n}_t)_{n\geq 0}$ determined by
\[   \th{n}_t(t^k) =\binom{k}{n} t^{k-n} \]
for all $k,n\in \NN$. Morally, $\th{n}_t$ equals $\frac{1}{n!}\partial_t^n$ although, the latter expression is not meaningful in characteristic $p<n$.
Then the constants $\{ f\in C(t) \mid \forall n>0: \th{n}_t(f)=0\}$ equal $C$, as one is used to from characteristic zero.

Using these iterative derivations, a Galois theory for linear iterative differential equations (usually called \textit{Picard-Vessiot theory}) has been developed by Matzat and van der Put \cite{bhm-mvdp:ideac} quite analogous to the Galois theory for linear differential equations in characteristic zero. 
This theory was improved by the author in \cite{am:gticngg} (see also \cite{am:igsidgg}) to a Galois theory whose Galois correspondence also takes into account intermediate fields over which the solution field is inseparable.
It even happens that the solution field itself is inseparable over the base field, in which case the Galois group is a non-reduced affine algebraic group scheme.

The inverse Galois problem asks which groups or group schemes, respectively, arise as Galois groups for given iterative differential fields.

In this article, we consider the inverse Galois problem over base differential fields which are finitely generated over their field of constants. Our main theorem is the following.

\begin{mainthm} (Theorem \ref{thm:realisation})
Let $(L,\theta)$ be an  iterative differential field of positive characteristic $p$
 with algebraically closed field of constants $C$ such that $L$ is finitely generated over $C$, and $L\neq C$. Then every reduced affine algebraic group scheme over $C$ is the Galois group of some iterative differential module over $(L,\theta)$.
\end{mainthm}

Here, we use ``algebraic'' synonymous to ``of finite type''.

The proof of this theorem is an adaptation of the proof of the corresponding statement in characteristic zero \cite[Cor.~4.13]{ab-dh-jh:dgglsf}, and it uses that every  reduced affine algebraic group scheme over $C$ occurs as iterative differential Galois group over the standard iterative differential field $(C(t),\theta_t)$ which is given in \cite[Thm.~8.8]{bhm-mvdp:cdgt}.

However, there is a major point which does not transfer from the proof in characteristic zero. One has to show that one can replace the given iterative derivation $\theta$ on such an ID-field $L$ by another one such that 
\begin{enumerate}
\item the inverse Galois problem does not change, and
\item the new ID-field contains  the standard ID-field $(C(t),\theta_t)$ for an appropriate element $t\in L$.
\end{enumerate}
This is the content of Section~\ref{sec:equivalent-IDs}.

\section{Notation}

In this section, we recall the main setting of Picard-Vessiot theory for iterative differential equations, as far as we will need it. For more details, we refer to the articles mentioned above (\cite{am:gticngg}, \cite{am:igsidgg}).

All rings are assumed to be commutative with unit. The set $\NN$ of natural numbers contains $0$.

An {\markdef iterative derivation} $\theta$ on a ring $R$ is a ring homomorphism
$\theta:R\to R\ps{T}$ into the power series ring over $R$ with the properties:
\begin{enumerate}[label={(\roman*)}]
\item $\theta^{(0)}(r)=r$ for all $r\in R$,
\item \label{item:iterativity} $\theta^{(i)}\circ \theta^{(j)}=\binom{i+j}{i}\theta^{(i+j)}$, for all $i,j\geq
0$,
\end{enumerate}
where the maps $\theta^{(i)}:R\to R$ are defined by
$\theta(r)=:\sum_{i=0}^\infty \theta^{(i)}(r)T^i$. The {\markdef ring of constants} of $R$ is denoted by
\[ R^\theta:=\{ r\in R\mid \theta(r)=r\cdot T^0\} = \{ r\in R\mid \forall i\geq 1:\,\th{i}(r)=0 \}. \]
The pair $(R,\theta)$ is called an {\markdef iterative differential ring}, or {\markdef ID-ring} for short.

If there is need to emphasize the extra
variable $T$ or if we use another name for the variable, we add a subscript to $\theta$, i.e.~denote
the iterative derivation by $\theta_T$ (resp. $\theta_U$ if the variable is named $U$).

For a given homomorphism of rings $f:R\to S$, we denote by $f\ps{T}$ the $T$-linear extension of $f$ to a homomorphism $R\ps{T}\to S\ps{T}$ of the power series rings. To be precise, we should speak of the continuous $T$-linear extension with respect to the $T$-adic topology on $R\ps{T}$ and $S\ps{T}$.
Actually, on all power series rings (also in several variables), we will use the $\m$-adic topology where $\m$ is the ideal generated by the variables, and all homomorphisms will be continuous with respect to these topologies.

\medskip

With this notation, the iterativity condition \ref{item:iterativity} for $\theta$ is equivalent to the commutativity of the diagram

\centerline{\xymatrix@C+10pt{
R \ar[r]^{\theta_U} \ar[d]_{\theta_T} & R\ps{U}
\ar[d]^{U\mapsto U+T} \\
R\ps{T} \ar[r]^(.45){\theta_U\ps{T}} & R\ps{U,T},
}}
\noindent or in other terms $\theta_U\ps{T}\circ \theta_T=\theta_{U+T}$.

\medskip

For an ID-ring $(R,\theta)$, an {\markdef iterative differential module} (or {\markdef ID-module} for short) $(M,\theta_M)$  over $(R,\theta)$ consists of a finitely generated $R$-module $M$ and an additive map $\theta_M:M\to M\ps{T}$ such that 
\begin{enumerate}[label={(\roman*)}] 
\item[(o)] $\theta_M(rm)=\theta(r)\theta_M(m)$, for all $r\in R$, $m\in M$,
\item $\theta_M^{(0)}=\id_M$, and
\item $\theta_M^{(i)}\circ \theta_M^{(j)}=\binom{i+j}{i}\theta_M^{(i+j)}$ for all $i,j\geq 0$,
\end{enumerate}
where the maps $\theta_M^{(i)}:M\to M$ are defined by $\theta_M(m)=:\sum_{i=0}^\infty \theta_M^{(i)}(m)T^i$.
Elements $m\in M$ such that $\theta_M(m)=m\in M\ps{T}$ are called {\markdef constant elements}.

If we use another variable or expression instead of $T$ or would like to emphasize $T$, we add another subscript to $\theta_M$, e.g.~$ \theta_{M,U}:M\to M\ps{U}, m\mapsto \sum_{i=0}^\infty \theta_M^{(i)}(m)U^i,$ and the iteration rule can be rewritten as
$\theta_{M,U}\ps{T}\circ \theta_{M,T}=\theta_{M,U+T}$.

An {\markdef ID-homomorphism} $\varphi:(M,\theta_M)\to (N,\theta_N)$ between two ID-modules $(M,\theta_M)$ and $(N,\theta_N)$ over $(R,\theta)$ is an $R$-module homomorphism $\varphi:M\to N$ such that $\varphi\ps{T}\circ \theta_M=\theta_N\circ \varphi$.

There should be no confusion between the iterative derivation $\theta_M$ on a module $M$, and the iterative derivation on the ring with emphasis on the used variable, e.g.~$\theta_U$, as letters $M$ and $N$ will always denote modules, and the names of variables are restricted to $T$ and $U$ and expressions thereof.

\begin{defn}
An iterative derivation $\theta$ on a ring $R$ is called {\markdef trivial}, if $\theta(r)=rT^0$ for all $r\in R$, or in other words, if $R^\theta=R$. It is called {\markdef non-trivial} if it is not trivial. 
\end{defn}

We gather a well-known fact for iterative derivations in the following lemma.

\begin{lem}\label{lem:existence-of-d}
Let $\theta$ be a non-trivial iterative derivation on a field $F$ of characteristic $p>0$.
\begin{enumerate}
\item For any non-constant $f\in F$, there is a unique integer $d\geq 0$ (depending on $f$) such that
$\th{p^d}(f)\neq 0$, and $\th{m}(f)=0$ for all $m$ which are not divisible by $p^d$. 
\item There is a unique  integer $d\geq 0$ such that
$\th{p^d}\neq 0$, and $\th{m}= 0$ for all $m$ which are not divisible by $p^d$. 
\end{enumerate}
\end{lem}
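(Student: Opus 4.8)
The plan is to prove part (1) first, since part (2) follows by applying part (1) to a suitable non-constant element, or by a directly analogous argument. For part (1), fix a non-constant $f\in F$ and let $S=\{m\geq 1 : \th{m}(f)\neq 0\}$, which is non-empty because $f$ is non-constant. Let $n$ be the smallest element of $S$, and write $n = p^d u$ with $p\nmid u$. First I would show $u=1$, i.e. $n$ is a power of $p$. The key tool is the iterativity relation \ref{item:iterativity}: for any $i,j\geq 0$ with $i+j=n$, we have $\th{i}\circ\th{j} = \binom{n}{i}\th{n}$. Choosing $i,j$ both strictly between $0$ and $n$ (possible once $n\geq 2$ and $n$ is not forced to be prime — but we must handle this carefully), minimality of $n$ gives $\th{j}(f)=0$, hence $\binom{n}{i}\th{n}(f)=0$, so $\binom{n}{i}=0$ in $\FF_p$ for every such $i$. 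By Lucas' theorem, $\binom{n}{i}\equiv 0 \pmod p$ for all $0<i<n$ precisely when $n$ is a power of $p$; so $n=p^d$. (The case $n$ prime and $n\neq p$, or $n$ composite, is killed because then some $\binom{n}{i}$ with $0<i<n$ is a unit mod $p$.)

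Next I would show that $\th{m}(f)=0$ whenever $p^d\nmid m$. Suppose toward a contradiction that $m$ is minimal with $\th{m}(f)\neq 0$ and $p^d\nmid m$; then $m>p^d$ (since all smaller nonzero-derivative indices are multiples of $p^d$ by minimality of $n=p^d$, and the only such index $<$ next multiple is $p^d$ itself... more precisely, every index $m'$ with $0<m'<m$ having $\th{m'}(f)\neq 0$ satisfies $p^d\mid m'$). Write $m = p^d q + r$ with $0<r<p^d$. Apply $\th{r}\circ\th{m-r} = \binom{m}{r}\th{m}$. Since $m-r = p^d q$ — hmm, this needs $\th{m-r}(f)$ to be controlled, which requires more than minimality; let me instead use a cleaner inductive decomposition. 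The robust approach: prove by strong induction on $m$ that $\th{m}(f)=0$ for all $m\geq 1$ with $p^d\nmid m$. For such $m$, pick any decomposition $m=i+j$ with $0<i,j<m$ such that $p^d\nmid i$ (always possible: e.g. take $i$ to be the largest power of $p$ dividing into the "$p$-adic tail" of $m$ below $p^d$, or simply $i = m - p^d\lfloor m/p^d\rfloor$ if that is positive and $<m$, else split off a smaller piece). Then by induction $\th{i}(f)=0$, so $\binom{m}{i}\th{m}(f)=0$. One then checks that among all such valid decompositions at least one has $\binom{m}{i}\not\equiv 0\pmod p$ (using Lucas: choose $i$ to be the lowest nonzero base-$p$ digit block of $m$), forcing $\th{m}(f)=0$. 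Finally, $d$ is unique because it is determined as the $p$-adic valuation of $\min S$.

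For part (2), apply part (1): since $\theta$ is non-trivial there exists a non-constant $f$; let $d$ be its associated integer, so $\th{p^d}(f)\neq 0$, whence $\th{p^d}\neq 0$, and $\th{m}(f)=0$ for $p^d\nmid m$. To get $\th{m}=0$ identically for $p^d\nmid m$, one argues that the set of valid "$d$" is the same for every non-constant $f$: if some $g$ had associated integer $e<d$, then $\th{p^e}(g)\neq 0$ while $\th{p^e}(f)=0$; but the whole operator $\th{p^e}$ being zero on $f$ is consistent, so instead one shows the minimal such $d$ over all $f$ works globally — concretely, let $d$ be minimal among the integers attached to non-constant elements, pick $f$ realizing it, and for $p^d\nmid m$ and arbitrary $g\in F$ use the product/Leibniz-type rule together with part (1) applied to $g$ and to $f$ to conclude $\th{m}$ annihilates all of $F$.

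The main obstacle I anticipate is the bookkeeping in the "$p^d\nmid m\Rightarrow\th{m}=0$" step: one must choose the decomposition $m=i+j$ so that simultaneously the induction hypothesis applies to $i$ (i.e. $p^d\nmid i$, $0<i<m$) and the binomial coefficient $\binom{m}{i}$ is a unit in $\FF_p$. Lucas' theorem is exactly the right instrument — writing $m$ in base $p$ and peeling off the least significant nonzero digit (times the appropriate power of $p$) as $i$ makes $\binom{m}{i}\equiv\binom{m_k}{m_k}\cdots\equiv 1$ and keeps $p^d\nmid i$ as long as that digit sits below position $d$, which is guaranteed by $p^d\nmid m$. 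Assembling this cleanly, rather than the routine field-theoretic parts, is where care is needed.
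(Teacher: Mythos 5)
Your proposal is correct, and it differs from the paper's proof mainly in which consequence of iterativity it leans on. The paper quotes from Matzat--van der Put the composition identity $(\th{p^r})^{m_r}\circ\cdots\circ(\th{p^0})^{m_0}=c\,\th{m}$ with $c\in\FF_p^\times$, where $m=m_0+m_1p+\cdots+m_rp^r$ is the base-$p$ expansion; taking $d$ least with $\th{p^d}(f)\neq 0$, any $m$ not divisible by $p^d$ has a nonzero digit below position $d$, so the composition kills $f$ and part (1) is immediate, and part (2) is obtained by taking the minimal $d$ occurring in part (1). You instead work only with the pairwise rule $\th{i}\circ\th{j}=\binom{i+j}{i}\th{i+j}$ plus Lucas' theorem: minimality of $n=\min\{m\geq 1 : \th{m}(f)\neq 0\}$ forces all middle binomials $\binom{n}{i}$, $0<i<n$, to vanish mod $p$, hence $n=p^d$; then for $p^d\nmid m$ you peel off the lowest nonzero base-$p$ digit $i=m_ep^e$ (with $e<d$, so $i<p^d$), where $\th{i}(f)=0$ by minimality -- your strong induction in fact only ever uses this base case -- while $\binom{m}{i}\equiv 1 \pmod p$, giving $\th{m}(f)=0$. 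This buys a self-contained argument at the price of some digit bookkeeping, whereas the paper's route is a one-liner that outsources the combinatorics to the cited identity (whose proof is the same kind of Lucas-type computation). For part (2) both you and the paper take the minimal $d$ over all non-constant elements; note that no Leibniz-type rule is needed there: if $g$ is non-constant with associated integer $d_g\geq d$ and $p^d\nmid m$, then $p^{d_g}\nmid m$, so part (1) applied to $g$ already gives $\th{m}(g)=0$, and constants are trivial.
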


\begin{proof}
Of course, an integer $d$ satisfying the two given properties in (1) or in (2), resp., has to be unique, since it must be the smallest $d$ such that $\th{p^d}(f)\neq 0$, or $\th{p^d}\neq 0$, respectively.
Furthermore, for the second part, one just has to take the minimal $d$ that occurs in the first part.
For obtaining the existence of $d$ for a given non-constant $f\in F$, we use the iterativity condition \ref{item:iterativity} of $\theta$. Namely, for a given positive integer $m$, write $m=m_0+m_1p+\cdots + m_rp^r$ in base $p$ expansion with $m_0,\ldots, m_r\in \{0,\ldots, p-1\}$. Then the iteration rule implies that
\[   (\th{p^r})^{m_r}\circ (\th{p^{r-1}})^{m_{r-1}} \circ \ldots \circ (\th{p^0})^{m_0}=c\cdot \th{m} \]
for a non-zero element $c\in \Fp$ (see \cite[Sect.~2]{bhm-mvdp:ideac}).
Hence, if $d\geq 0$ is the smallest integer such that $\th{p^d}(f)\neq 0$, then for every $m\geq 1$ which is not divisible by $p^d$, one of the elements $m_0,\ldots, m_{d-1}$ is non-zero, and hence 
$\th{m}(f)=0$.
\end{proof}

We will also be concerned with algebraic extensions.

\begin{lem}\label{lem:algebraic-extension}
Let $(F,\theta)$ be an ID-field of characteristic $p$, and let $L$ be an algebraic extension of $F$.
\begin{enumerate}
\item If $L/F$ is separable, then the iterative derivation $\theta$ uniquely extends to an iterative derivation on $L$.
\item If $L/F$ is not separable, there is at most one extension of $\theta$ to $L$.
\end{enumerate}
\end{lem}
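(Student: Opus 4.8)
The plan is to reduce everything to the case of a simple algebraic extension $L = F(\alpha)$ and then handle the separable and inseparable cases via the ``higher product rule''. Recall that an iterative derivation $\theta$ on $F$ extends to one on $L = F(\alpha)$ precisely when we can define compatible maps $\th{n}$ on $L$; by the iteration rule and Lemma \ref{lem:existence-of-d}, it suffices to know the values $\th{n}(\alpha)$ for all $n$, and these are constrained by requiring that the minimal polynomial of $\alpha$ over $F$ be ``differentiated consistently''. So the first step is: write $\theta_T \colon F \to F\ps{T}$, extend scalars to get $\theta_T\ps{T} \colon F\ps{T} \to F\ps{T}$ applied to the minimal polynomial $g(x) = \sum a_i x^i \in F[x]$ of $\alpha$, obtaining $g^{\theta}(x) := \sum \theta_T(a_i) x^i \in F\ps{T}[x]$, and observe that any extension $\theta_{T,L}$ must satisfy $g^{\theta}\big(\sum_n \th{n}(\alpha) T^n\big) = 0$ in $L\ps{T}$, since $\theta_{T,L}$ is a ring homomorphism killing $g(\alpha) = 0$.

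For part (1), when $L/F$ is separable, $g'(\alpha) \neq 0$, and the equation $g^{\theta}(X) = 0$ for $X = \alpha + \sum_{n\geq 1} \th{n}(\alpha) T^n \in L\ps{T}$ can be solved recursively by Hensel's lemma / the formal implicit function theorem: the coefficient of $T^n$ gives $g'(\alpha)\cdot \th{n}(\alpha) + (\text{polynomial in } \th{m}(\alpha) \text{ for } m < n \text{ and the } \th{j}(a_i)) = 0$, which determines $\th{n}(\alpha)$ uniquely in $L$ because $g'(\alpha)$ is invertible. This both constructs the extension and shows uniqueness. One then has to check that the map $\theta_{T,L}$ so obtained is genuinely an iterative derivation, i.e.\ satisfies the iterativity diagram $\theta_{U,L}\ps{T} \circ \theta_{T,L} = \theta_{U+T,L}$; this follows because both sides are $F$-algebra homomorphisms $L \to L\ps{U,T}$ agreeing on $F$ (by iterativity of $\theta$ on $F$) and sending $\alpha$ to a root of $g^{\theta}$ reducing to $\alpha$, and such a root is unique by the same Hensel argument applied over $L\ps{U,T}$. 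A general separable algebraic extension is a union of finite separable ones, each a simple extension, and the uniqueness guarantees the extensions are compatible, so $\theta$ extends uniquely to all of $L$.

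For part (2), suppose $L/F$ is not separable; I want to show there is \emph{at most} one extension (existence may genuinely fail). Pick $\alpha \in L$ inseparable over $F$, so its minimal polynomial $g$ has $g'(\alpha) = 0$. If $\theta_{T,L}$ is any extension, then as above $g^{\theta}(X) = 0$ where $X = \sum_n \th{n}(\alpha) T^n$. The point is that the derivative condition degenerates: writing $g(x) = h(x^{p^e})$ with $h$ separable (taking $e$ maximal), one shows that $\th{n}(\alpha)$ is forced to be $0$ unless $p^e \mid n$, and that $\th{p^e n}(\alpha)$ is then determined by applying the separable case to $\beta = \alpha^{p^e}$ over $F$ together with the relation $\th{p^e n}(\alpha)^{p^e} = \th{n}(\beta)$ (using that $r \mapsto r^{p^e}$ is injective on a field, plus the fact from Lemma \ref{lem:existence-of-d} about which $\th{m}$ vanish). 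Hence all values $\th{n}(\alpha)$ are uniquely determined by $\theta$ on $F$, giving uniqueness on $F(\alpha)$; iterating over a tower (inseparable steps handled this way, separable steps by part (1)) and passing to the union gives uniqueness on all of $L$. I expect the main obstacle to be exactly this inseparable bookkeeping: pinning down precisely why $\th{n}(\alpha)$ vanishes for $p^e \nmid n$ and why the ``$p^e$-th root'' is uniquely extractable, which is where Lemma \ref{lem:existence-of-d} does the real work.
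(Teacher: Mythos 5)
Your part (1) is essentially fine: the recursion on the coefficients of $g^{\theta}$ using $g'(\alpha)\neq 0$, plus the uniqueness-of-the-root argument over $L\ps{U,T}$ to get iterativity, is the classical F.~K.~Schmidt argument that the paper simply cites from Matzat--van der Put, so filling it in causes no harm.

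Part (2), however, contains a genuine error in exactly the place you flagged. The claim that $\th{n}(\alpha)$ is forced to vanish unless $p^e\mid n$ is false: take $F=\FF_p(t^p)\subset L=\FF_p(t)$ with $\theta$ the restriction of $\theta_t$ to $F$; here $\alpha=t$, $e=1$, and the extension $\theta_t$ to $L$ has $\th{1}(t)=1\neq 0$. Your key relation is also written backwards: from $\theta_L(\alpha)^{p^e}=\theta_L(\alpha^{p^e})=\theta_L(\beta)$ one gets, comparing coefficients, $\th{n}(\alpha)^{p^e}=\th{np^e}(\beta)$ for every $n$ (and $\th{m}(\beta)=0$ when $p^e\nmid m$), not $\th{p^e n}(\alpha)^{p^e}=\th{n}(\beta)$. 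As stated, your bookkeeping would force coefficients to be zero which are in fact nonzero whenever an extension exists, as in the example above; and Lemma~\ref{lem:existence-of-d}, which concerns a single exponent $d$ attached to the iterative derivation itself, has nothing to do with the inseparability exponent $e$ of $\alpha$ and cannot rescue the vanishing claim. The correct argument --- the paper's --- needs no vanishing statement and no minimal-polynomial analysis: given any $l\in L$, choose $j$ with $l^{p^j}$ separable over $F$; any extension $\theta_L$ restricts on $F(l^{p^j})$ to the unique extension provided by part (1), so $\theta_L(l)^{p^j}=\theta_L(l^{p^j})$ is determined, and since $x\mapsto x^{p^j}$ is injective on the integral domain $L\ps{T}$, the element $\theta_L(l)$ itself is uniquely determined. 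This per-element argument also makes your tower-and-union step unnecessary.
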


\begin{proof}
The first part is due to F.~K.~Schmidt, and a proof is given in \cite[Sect.~2.1]{bhm-mvdp:ideac}.
For the second part, assume that there exists an extension of $\theta$ to $L$.
Since for any $l\in L$ some power $l^{p^j}$ is separable over $F$, this extension has to agree on $F(l^{p^j})$ with
the unique extension of $\theta$ to $F(l^{p^j})$ stated in the first part. Hence, $\theta(l)$ has to equal the uniquely determined element $\left( \theta(l^{p^j})\right)^{-p^j}\in L\ps{T}$.
\end{proof}

\subsection*{Picard-Vessiot theory}

We now fix an ID-field $(F,\theta)$, and let $C$ be its field of constants.

A {\markdef Picard-Vessiot ring} for an ID-module $(M,\theta_M)$ is an ID-ring extension $(R,\theta)$ of $(F,\theta)$ satisfying
\begin{enumerate} 
\item $R^\theta=F^\theta$, i.e.~$R$ has the same constants as $F$,
\item $R$ is ID-simple, i.e.~$R$ has no non-trivial ideals which are stable under all $\th{k}$,
\item $R$ is minimal with the property that $R\otimes_F M$ has a basis of constant elements.
\end{enumerate}
As in the differential setting in characteristic zero, such a Picard-Vessiot ring is an integral domain, and its field of fractions $E$ is a {\markdef Picard-Vessiot field} for $M$, i.e.~a minimal ID-field extension of $F$, having the same constants, and such that $E\otimes_F M$ has a basis of constant elements.

For a Picard-Vessiot ring $R$ with corresponding Picard-Vessiot field $E$, the {\markdef Galois group} is defined to be the  functor
$$\uGal(E/F):=\uGal(R/F): \cat{Alg}_C \to \cat{Groups},
D\mapsto \Aut^{\theta}(R\otimes_C D/F\otimes_C D)$$
where $\Aut^{\theta}$ denotes the group of ring automorphisms commuting with all $\th{n}$, and 
the $C$-algebra $D$ is equipped with the trivial iterative derivation, i.e.~$R\otimes_C D$ is an extension by constants.
This group functor turns out to be representable by $(R\otimes_F R)^\theta$, and the latter algebra is indeed finitely generated over $C$. Therefore, the group functor equals the affine algebraic group scheme $\Spec\bigl( (R\otimes_F R)^\theta\bigr)$ over $C$.
Moreover, $\Spec(R)$ is a $\uGal(R/F)$-torsor over $F$, and the torsor property is induced by the natural left-$R$-linear homomorphism of ID-rings
\begin{equation}\label{eq:torsor-isom}
  R\otimes_C (R\otimes_F R)^\theta \to R\otimes_F R
\end{equation}
which indeed is an isomorphism.
This also implies that $\trdeg(E/F)=\dim(\uGal(E/F))$.

Although, there is a full Galois correspondence between intermediate ID-fields $F\subseteq L\subseteq E$ and closed subgroup schemes $\H\subseteq \G$, we will only need the restricted Galois correspondence given in \cite{am:pvtdsr} which works on the ring level, and does not need the detour to the Picard-Vessiot fields.

\begin{thm}\label{thm:galois-correspondence} (\cite[Thm.~6.12]{am:pvtdsr})
Let $R$ be a PV-ring for some ID-module over $F$, and $\G=\uGal(R/F)$. Then there is a bijection between
$$\fT:=\{ T \mid F\subseteq T\subseteq R \text{ intermediate PV-ring} \}$$
and
$$\fH:=\{ \H \mid \H\leq \G \text{ closed normal subgroup scheme of }\G \}$$
given by
$\Psi:\fT\to \fH, T\mapsto \uGal(R/T)$ resp.~$\Phi:\fH\to \fT, \H\mapsto R^{\H}$.
\end{thm}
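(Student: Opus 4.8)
The plan is to deduce everything from the torsor isomorphism \eqref{eq:torsor-isom}, i.e.\ $R\otimes_C(R\otimes_F R)^\theta\xrightarrow{\ \sim\ }R\otimes_F R$, together with faithfully flat descent along the ring extensions involved. Write $\G=\uGal(R/F)$ and $\mathcal O(\G):=(R\otimes_F R)^\theta$, so that $R\otimes_F R\cong R\otimes_C\mathcal O(\G)$ as ID-$R$-algebras.

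First I would verify that both maps are well defined. For an intermediate PV-ring $T$ one shows, as usual, that $R$ is again a Picard--Vessiot ring, now over $T$: it is ID-simple over $T$ because ID-stable ideals of $R$ do not depend on the base ring, it has the same ring of constants since $C=F^\theta\subseteq T^\theta\subseteq R^\theta=C$, and minimality over $T$ follows from minimality over $F$. Hence $\uGal(R/T)=\Spec\bigl((R\otimes_T R)^\theta\bigr)$ is defined, and the natural surjection $(R\otimes_F R)^\theta\twoheadrightarrow(R\otimes_T R)^\theta$ realises it as a closed subgroup scheme of $\G$. Normality is then cheap: since $T$ is itself a PV-ring over $F$, every ID-automorphism of $R\otimes_C D$ over $F\otimes_C D$ maps $T\otimes_C D$ to itself (it permutes the space of constant solutions generating $T\otimes_C D$ over $F\otimes_C D$), so restriction defines a morphism of group schemes $\G\to\uGal(T/F)$ whose kernel is exactly $\uGal(R/T)$, and a kernel is normal. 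Conversely, for a closed normal subgroup scheme $\H\leq\G$ the invariant ring $R^{\H}$ is ID-stable in $R$ because the $\H$-action commutes with all $\th{k}$, so it is at least an intermediate ID-ring; that it is a PV-ring is settled in the next step.

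Next I would install the relative torsor isomorphisms. Taking $\H$-invariants on the second tensor factor of $R\otimes_F R\cong R\otimes_C\mathcal O(\G)$ — where $\H$ acts through the Galois action on $R$, which under the isomorphism corresponds to right translation on $\mathcal O(\G)$ — and using that $R$ is faithfully flat over $F$, so that invariants (being equalizers) commute with $R\otimes_F-$, yields $R\otimes_F R^{\H}\cong R\otimes_C\mathcal O(\G)^{\H}=R\otimes_C\mathcal O(\G/\H)$, with $\G/\H$ the affine quotient group scheme. This exhibits $\Spec R^{\H}$ as a $\G/\H$-torsor over $F$: consequently $R^{\H}$ is finitely generated over $F$ (faithfully flat descent of finite generation from $R\otimes_C\mathcal O(\G/\H)$), ID-simple (a proper ID-ideal $I\subseteq R^{\H}$ would generate a proper ID-ideal $IR\subsetneq R$ of the ID-simple ring $R$, forcing $I=0$), and has constants $C$; choosing a faithful representation of $\G/\H$ and forming the associated ID-module over $F$ shows $R^{\H}$ is minimal, hence $R^{\H}\in\fT$. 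For $\Phi\circ\Psi=\id_{\fT}$: applying the torsor isomorphism for $R$ over the base $T$ identifies $R\otimes_T R\cong R\otimes_C\mathcal O\bigl(\uGal(R/T)\bigr)$ so that $r\in R^{\uGal(R/T)}$ translates into the equality $r\otimes1=1\otimes r$ in $R\otimes_T R$; by faithfully flat descent along $T\hookrightarrow R$ this forces $r\in T$, so $R^{\uGal(R/T)}=T$.

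Finally, for $\Psi\circ\Phi=\id_{\fH}$ the inclusion $\H\leq\uGal(R/R^{\H})$ is immediate. For the reverse I would compare coordinate rings: using $R\otimes_F R^{\H}\cong R\otimes_C\mathcal O(\G/\H)$ compatibly with $R\otimes_F R\cong R\otimes_C\mathcal O(\G)$, base change along the multiplication map $R\otimes_F R^{\H}\to R$ (which corresponds to the counit of $\mathcal O(\G/\H)$) gives
$$R\otimes_{R^{\H}}R\;\cong\;(R\otimes_C\mathcal O(\G))\otimes_{R\otimes_C\mathcal O(\G/\H)}R\;\cong\;R\otimes_C\bigl(\mathcal O(\G)\otimes_{\mathcal O(\G/\H)}C\bigr)\;\cong\;R\otimes_C\mathcal O(\H),$$
the last step because $\H$ is the fibre of $\G\to\G/\H$ over the identity. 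Taking $\theta$-constants — all isomorphisms being ID-isomorphisms, $R^\theta=C$, and $\mathcal O(\H)$ carrying the trivial ID-structure — yields $\bigl(R\otimes_{R^{\H}}R\bigr)^\theta\cong\mathcal O(\H)$, i.e.\ $\uGal(R/R^{\H})=\H$. I expect the main obstacle to be the well-definedness of $\Phi$: proving that $R^{\H}$ is genuinely a Picard--Vessiot ring — in particular exhibiting one ID-module over $F$ that witnesses its minimality — and setting up the relative torsor isomorphism $R\otimes_F R^{\H}\cong R\otimes_C\mathcal O(\G/\H)$ compatibly enough with the ambient torsor structure and with all ID-structures that the base-change computation above is legitimate. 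Everything else is bookkeeping with faithfully flat descent.
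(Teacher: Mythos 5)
A preliminary remark on the comparison you were asked to make: the paper does not prove this theorem at all --- it is quoted verbatim from \cite[Thm.~6.12]{am:pvtdsr} --- so the only meaningful comparison is with that reference, whose proof is built on exactly the Hopf-algebra/torsor formalism you employ, but developed there for Picard--Vessiot rings over differentially simple base \emph{rings} rather than fields. Your overall strategy is therefore the right one in spirit, but as written it has genuine gaps, and they sit precisely at the points where that extra theory carries the weight.

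Concretely: (i) your ID-simplicity argument for $R^{\H}$ is circular. If $0\neq I\subsetneq R^{\H}$ is an ID-ideal, ID-simplicity of $R$ gives $IR=R$; to reach a contradiction you need $IR\neq R$, equivalently $IR\cap R^{\H}=I$, i.e.\ purity/faithful flatness of $R$ over $R^{\H}$ --- which you have not established and which is a substantive input: it comes from faithful flatness of $C[\G]$ over $C[\G]^{\H}=C[\G/\H]$ (Takeuchi's theorem, resp.\ the quotient theory for affine group schemes, needed here also for non-reduced $\H$ in characteristic $p$), transported through your relative torsor isomorphism. The same quotient theory is used silently when you identify $C[\G]\otimes_{C[\G/\H]}C$ with $C[\H]$, i.e.\ when you assert that $\H$ is exactly the scheme-theoretic kernel of a faithfully flat map $\G\to\G/\H$. (ii) The elements $T\in\fT$ are ID-simple finitely generated $F$-algebras, not fields; hence ``$R$ is a PV-ring over $T$'', the torsor isomorphism $R\otimes_T R\cong R\otimes_C (R\otimes_T R)^\theta$, the surjectivity of $(R\otimes_F R)^\theta\to(R\otimes_T R)^\theta$, and in particular your descent step $\{r\in R\mid r\otimes 1=1\otimes r\ \text{in}\ R\otimes_T R\}=T$ (which needs $R$ faithfully flat, or at least pure, over $T$) are not covered by the field-based theory recalled in this paper; they are themselves theorems of \cite{am:pvtdsr}, not bookkeeping. (iii) As you acknowledge, minimality of $R^{\H}$ --- producing an ID-module over $F$ (e.g.\ the one attached to a faithful representation of $\G/\H$ via the correspondence between representations of $\G$ and ID-modules trivialized by $R$) whose fundamental solutions generate exactly $R^{\H}$ --- is missing, and this together with (i) is the heart of the proof rather than a technicality. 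In short: correct framework and correct formal computations, but the proposal currently presupposes the main structural results it would have to prove or cite.
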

Here, the invariants $R^\H$ are defined to be
\[  R^\H:=\{ r\in R\mid \forall D\in  \cat{Alg}_C, h\in \H(D): h(r\otimes 1)=r\otimes 1 \}. \]
Of course, the Galois group of $R^\H$ over $F$ is the factor group $\G/\H$.

Although it is not stated in the theorem, the Galois correspondence is even an anti-isomorphisms of poset lattices, i.e.~joins are mapped to intersections and vice versa. For later use, we explicitly mention a special case with its proof.

\begin{lem}\label{lem:pv-ring-is-tensor-product}
Let $R$ be a PV-ring over $F$ whose Galois group is a product $\G_1\times \G_2$ of two  group schemes $\G_1,\G_2$ over $C$. Let $R_1:=R^{\G_1\times 1}$ and $R_2:=R^{1\times \G_2}$. Then $R\isom R_1\otimes_F R_2$.
\end{lem}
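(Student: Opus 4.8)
My plan is to realize the claimed isomorphism $R \cong R_1 \otimes_F R_2$ by a direct application of the torsor isomorphism \eqref{eq:torsor-isom} together with the Galois correspondence of Theorem~\ref{thm:galois-correspondence}. The key observation is that $\G_1 \times 1$ and $1 \times \G_2$ are both closed normal subgroup schemes of $\G = \G_1 \times \G_2$, so that $R_1 = R^{\G_1 \times 1}$ and $R_2 = R^{1 \times \G_2}$ are intermediate PV-rings by $\Phi$, with $\uGal(R/R_1) = \G_1 \times 1$ and $\uGal(R/R_2) = 1 \times \G_2$. Since the Galois correspondence is an anti-isomorphism of lattices (as noted just before the lemma), the join $R_1 \cdot R_2$ (the smallest intermediate PV-ring containing both) corresponds to the intersection $(\G_1 \times 1) \cap (1 \times \G_2) = 1$, and hence $R_1 \cdot R_2 = R^1 = R$. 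So the multiplication map $R_1 \otimes_F R_2 \to R$ is surjective, and it remains to prove injectivity, equivalently that $\dim_F$ or transcendence degrees match up, or better, to identify the map with a restriction of \eqref{eq:torsor-isom}.

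First I would set up the comparison with the torsor isomorphism. Applying $\Phi$ to the factors, $R_i$ is the ring of $\G_i$-invariants; dually, by the torsor property, $R \otimes_F R \cong R \otimes_C (R \otimes_F R)^\theta$ and $(R\otimes_F R)^\theta$ is the coordinate ring $C[\G] = C[\G_1] \otimes_C C[\G_2]$. The strategy is then to show that $R_1 \otimes_F R$, viewed inside $R \otimes_F R$, corresponds under \eqref{eq:torsor-isom} to $R \otimes_C C[\G_2]$ (the invariants being exactly those that come from the quotient torsor $\Spec(R_1)$ under $\G/(\G_1\times 1) = \G_2$), and symmetrically for $R_2$. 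Taking the appropriate "fiber over $F$" — i.e.\ intersecting with the relevant constant/invariant subrings — one reads off that $R_1 \otimes_F R_2 \hookrightarrow R$ with image all of $R$.

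Alternatively, and perhaps more cleanly, I would argue by dimensions: $\trdeg(R/F) = \dim \G = \dim\G_1 + \dim\G_2$, while $\trdeg(R_i/F) = \dim(\G/\uGal(R/R_i)) = \dim \G_{3-i}$, so $\trdeg(R_1/F) + \trdeg(R_2/F) = \trdeg(R/F)$. Combined with the surjectivity of $R_1 \otimes_F R_2 \to R$ established above and the fact that all three rings are domains finitely generated over $F$ of the matching transcendence degree, any surjection $R_1 \otimes_F R_2 \twoheadrightarrow R$ whose source has Krull dimension (over $F$) at most $\trdeg(R_1/F) + \trdeg(R_2/F)$ must be injective once one checks $R_1 \otimes_F R_2$ is itself a domain. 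That last point — that the tensor product over $F$ of the two intermediate PV-rings is reduced, indeed a domain — is the step I expect to be the main obstacle: it is not automatic for tensor products of $F$-algebras, and the cleanest route is again through the torsor isomorphism, which exhibits $R \otimes_F R$ as a base change to $R$ of the (geometrically integral) group scheme $\G$, forcing the relevant subrings to behave well. I would therefore lean on \eqref{eq:torsor-isom} as the real engine of the proof and treat the dimension count as a consistency check.
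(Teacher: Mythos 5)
There is a genuine gap, in two places. First, your surjectivity step conflates the lattice join with the ring compositum: the anti-isomorphism of lattices tells you that the smallest element of $\fT$ containing $R_1$ and $R_2$ corresponds to $(\G_1\times 1)\cap(1\times\G_2)=1$ and hence equals $R$, but the image of the multiplication map $R_1\otimes_F R_2\to R$ is the subring generated by $R_1$ and $R_2$, and you have not shown that this subring lies in $\fT$ (i.e.\ is itself an intermediate PV-ring, say for $M_1\oplus M_2$). Without that, the lattice argument only says that every intermediate PV-ring containing both factors is $R$; it does not yet give surjectivity. Second, and more seriously, the injectivity (and hence the isomorphism) is never actually established: your route (a) is only a gesture (``intersect with the relevant constant/invariant subrings''), and your route (b) reduces everything to the claim that $R_1\otimes_F R_2$ is a domain, which you concede you cannot prove and which is essentially equivalent to the lemma itself. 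Saying that the torsor isomorphism is ``the real engine'' is correct, but you never say how it is to be applied, and that is precisely the missing idea.

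The paper's argument shows how: do not work inside $R$ at all, but base change the canonical ID-homomorphism $\varphi\colon R_1\otimes_F R_2\to R$ along the faithfully flat extension $F\to R$. By the Galois correspondence $R_1$ and $R_2$ are PV-rings over $F$ with groups $\G_2$ and $\G_1$ respectively, so \eqref{eq:torsor-isom} gives $R_1\otimes_F R_1\isom R_1\otimes_C C[\G_2]$ and $R_2\otimes_F R_2\isom R_2\otimes_C C[\G_1]$; tensoring up to $R$ yields
\[
R\otimes_F(R_1\otimes_F R_2)\isom (R\otimes_F R_1)\otimes_R(R\otimes_F R_2)\isom R\otimes_C C[\G_1\times\G_2],
\]
while \eqref{eq:torsor-isom} applied to $R$ itself gives $R\otimes_F R\isom R\otimes_C C[\G_1\times\G_2]$. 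One then checks that the resulting isomorphism $R\otimes_F(R_1\otimes_F R_2)\to R\otimes_F R$ is exactly $\id_R\otimes\varphi$, and faithful flatness of $R/F$ lets you descend: $\varphi$ is an isomorphism. This yields injectivity and surjectivity simultaneously, with no need for the compositum-is-PV fact, the dimension count, or the integrality of $R_1\otimes_F R_2$ — all of which your proposal either leaves open or implicitly assumes.
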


\begin{proof}
Consider the homomorphism of ID-rings $\varphi:R_1\otimes_F R_2\to R$ given by the embeddings. Our aim is to show that it is an isomorphism.

By the Galois correspondence, $R_1$ and $R_2$ are Picard-Vessiot rings over $F$ with $\uGal(R_1/F)=(\G_1\times \G_2)/(\G_1\times 1)\isom \G_2$ and $\uGal(R_2/F)=(\G_1\times \G_2)/(1\times \G_2)\isom \G_1$.
The isomorphism \eqref{eq:torsor-isom} then shows that $R_1\otimes_C C[\G_2]\isom R_1\otimes_F R_1$ and
$R_2\otimes_C C[\G_1]\isom R_2\otimes_F R_2$.\\
Therefore, using both isomorphisms after tensoring with $R$,
\begin{eqnarray*}
R\otimes_F (R_1\otimes_F R_2) &\isom & (R\otimes_F R_1)\otimes_R (R\otimes_F R_2) 
\isom  (R \otimes_C C[\G_2])\otimes_R (R\otimes_C C[\G_1]) \\
&\isom & R \otimes_C (C[\G_2] \otimes_C C[\G_1]) 
\isom  R \otimes_C C[\G_1\times \G_2]
\end{eqnarray*}
On the other hand, applying the isomorphism \eqref{eq:torsor-isom} to $R$ directly leads to
\[ R\otimes_F R \isom R \otimes_C C[\G_1\times \G_2] \]
Hence, $R\otimes_F (R_1\otimes_F R_2) \stackrel{\isom}{\longrightarrow} R\otimes_F R$.

A closer look on the isomorphism reveals that this isomorphism is indeed $\id_R\otimes \varphi$, and since $R/F$ is faithfully flat, $\varphi$ is an isomorphism.
\end{proof}

\section{Equivalence of iterative derivations}\label{sec:equivalent-IDs}

In this section, we introduce equivalence of iterative derivations which allows us to change the iterative derivation on an ID-field without changing the inverse Galois problem.

\begin{defn}\label{def:equivalent-IDs}
Let $F$ be a field, and let $\theta$ and $\tilde{\theta}$ be two iterative derivations on $F$. We call
$\theta$ and $\tilde{\theta}$ {\markdef equivalent}, if there exists a  continuous isomorphism of $F$-algebras
$\lambda:F\ps{T}\to F\ps{T}$ such that $\tilde{\theta}=\lambda\circ \theta$.
\end{defn}

\begin{rem}
Of course, any continuous homomorphism of $F$-algebras $\lambda:F\ps{T}\to F\ps{T}$ is uniquely determined by
the image $\lambda(T)=\sum_{n=0}^\infty a_nT^n\in F\ps{T}$ of $T$. To be well-defined we need $\lambda(T)\in TF\ps{T}$, i.e.~$a_0=0$. The homomorphism $\lambda$ is an isomorphism, if in addition $a_1\neq 0$.

We denote the image of $T$ under such a homomorphism $\lambda$ by $P_\lambda(T)\in F\ps{T}$ for being able to also consider $P_\lambda(U)\in F\ps{U}$ for another indeterminate $U$.

The iterative derivation $\tilde{\theta}$ can then be written in terms of $\theta$ and $P_\lambda(T)$ as
\[  \tilde{\theta}(f)=\sum_{k=0}^\infty \th{k}(f)P_\lambda(T)^k \]
for all $f\in F$.
\end{rem}

\begin{lem}\label{lem:cond-on-lambda}
Let $F$ be a field, and let $\theta$ be a non-trivial iterative derivation  on $F$. Further, let
$\lambda:F\ps{T}\to F\ps{T}$ be a  continuous homomorphism of $F$-algebras. We denote $P_\lambda(T):=\lambda(T)\in F\ps{T}$, and let
$\tilde{\theta}:=\lambda\circ \theta:F\to F\ps{T}$. The following are equivalent:
\begin{enumerate}
\item The map $\tilde{\theta}$ is an iterative derivation.
\item $ P_\lambda(U+T)= P_\lambda(U)+ \tilde{\theta}_U\ps{T}(P_\lambda(T))\in F\ps{U,T}$.
\end{enumerate}
\end{lem}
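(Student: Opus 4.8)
The plan is to unwind both conditions in terms of the compositional machinery for iterative derivations recalled in Section~2, and to see that condition~(2) is precisely the ``iterativity in one variable'' equation that remains after one strips off the genuine iterativity of $\theta$. First I would record the algebraic identities governing $\tilde\theta$: since $\tilde\theta = \lambda\circ\theta$ with $\lambda$ continuous and $F$-linear, we have $\tilde\theta(f)=\sum_k \th{k}(f)P_\lambda(T)^k$ for all $f\in F$, and correspondingly $\tilde\theta_U(f)=\sum_k \th{k}(f)P_\lambda(U)^k$. The key computation is to express $\tilde\theta_U\ps{T}\circ\tilde\theta_T$. Feeding $\tilde\theta_T(f)=\sum_j\th{j}(f)P_\lambda(T)^j$ into $\tilde\theta_U\ps{T}$, which acts $T$-linearly and sends $g\in F$ to $\sum_i\th{i}(g)P_\lambda(U)^i$, one gets
\[
\tilde\theta_U\ps{T}\bigl(\tilde\theta_T(f)\bigr) = \sum_{i,j}\th{i}\bigl(\th{j}(f)\bigr)\,P_\lambda(U)^i\,P_\lambda(T)^j.
\]
Now I would invoke the genuine iterativity of $\theta$, namely $\th{i}\circ\th{j}=\binom{i+j}{i}\th{i+j}$, equivalently $\theta_U\ps{T}\circ\theta_T=\theta_{U+T}$, to collapse the double sum: it becomes $\sum_m\th{m}(f)\bigl(\sum_{i+j=m}\binom{m}{i}P_\lambda(U)^iP_\lambda(T)^j\bigr)=\sum_m\th{m}(f)\bigl(P_\lambda(U)+P_\lambda(T)\bigr)^m$. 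Thus $\tilde\theta_U\ps{T}\circ\tilde\theta_T$ is the substitution into $\theta$ of the power series $P_\lambda(U)+P_\lambda(T)\in F\ps{U,T}$.

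On the other side, $\tilde\theta$ is an iterative derivation if and only if $\tilde\theta_U\ps{T}\circ\tilde\theta_T=\tilde\theta_{U+T}$ (this is the two-variable reformulation of axioms (i),(ii) recalled for $\theta$ in Section~2, applied verbatim to $\tilde\theta$; I should note that $\tilde\theta^{(0)}=\id$ is automatic since $P_\lambda(T)$ has zero constant term, so the substitution at $T=0$ is the identity, and additivity and the Leibniz/multiplicativity property (o) are immediate from $\lambda$ being an $F$-algebra homomorphism). By definition $\tilde\theta_{U+T}(f)=\lambda_{U+T}(\theta(f))=\sum_m\th{m}(f)P_\lambda(U+T)^m$, i.e.\ the substitution into $\theta$ of $P_\lambda(U+T)$. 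Comparing the two displays, the iterativity of $\tilde\theta$ says that substituting $P_\lambda(U)+P_\lambda(T)$ into $\theta$ agrees with substituting $P_\lambda(U+T)$ into $\theta$, for every $f\in F$.

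It remains to see that this agreement is equivalent to the equality of the two power series themselves, which is exactly condition~(2) after one observes $\tilde\theta_U\ps{T}(P_\lambda(T))=\sum_i\th{i}(P_\lambda(T))P_\lambda(U)^i$; here one must be careful that $\tilde\theta_U\ps{T}$ applied to the element $P_\lambda(T)=\sum_n a_nT^n$ of $F\ps{T}$ acts coefficientwise on the $a_n\in F$ (because it is $T$-linear), and $\sum_m\th{m}(a_n)(P_\lambda(U)+P_\lambda(T))^m$ expanded and recollected in powers of $T$ gives precisely $\sum_i\th{i}(P_\lambda(T))P_\lambda(U)^i$ by the same iterativity collapse used above — so the left display equals $P_\lambda(U)+\tilde\theta_U\ps{T}(P_\lambda(T))$, wait, more precisely the substitution of $P_\lambda(U)+P_\lambda(T)$ into $\theta$, evaluated on $f$, must be matched against $P_\lambda(U+T)$ substituted into $\theta$. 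The direction (2)$\Rightarrow$(1) is then immediate: if $P_\lambda(U+T)=P_\lambda(U)+\tilde\theta_U\ps{T}(P_\lambda(T))$ and one checks the right-hand side equals $P_\lambda(U)+P_\lambda(T)$ after the collapse — actually this last identity is not automatic, so I would instead argue (2)$\Rightarrow$(1) by substituting the right-hand side of (2) into $\theta$ and running the collapse in reverse. For (1)$\Rightarrow$(2), I would apply the iterativity identity to a \emph{non-constant} element $f_0\in F$, available because $\theta$ is non-trivial: matching coefficients of the ``first'' non-vanishing $\th{p^d}(f_0)$ (in the notation of Lemma~\ref{lem:existence-of-d}) and using that the relevant graded piece is a free rank-one module over the relevant coefficient, one extracts $P_\lambda(U+T)-P_\lambda(U)=\tilde\theta_U\ps{T}(P_\lambda(T))$. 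The main obstacle is this last cancellation step: substitution into $\theta$ need not be injective on $F\ps{U,T}$ in general, so (1)$\Rightarrow$(2) genuinely uses non-triviality of $\theta$ and a careful choice of test element together with Lemma~\ref{lem:existence-of-d}; everything else is the bookkeeping of the two-variable iterativity formalism.
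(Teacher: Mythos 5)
Your central computation is incorrect, and the error is exactly at the point the lemma is designed to capture. You claim $\tilde{\theta}_U\ps{T}\bigl(\tilde{\theta}_T(f)\bigr)=\sum_{i,j}\th{i}\bigl(\th{j}(f)\bigr)P_\lambda(U)^iP_\lambda(T)^j$, i.e.\ you let $\tilde{\theta}_U\ps{T}$ pass through $P_\lambda(T)^j$ unchanged. But $T$-linearity of $\tilde{\theta}_U\ps{T}$ only means that $T$ itself is fixed; the coefficients $a_n\in F$ of $P_\lambda(T)=\sum_n a_nT^n$ are moved by $\tilde{\theta}_U$, so $\tilde{\theta}_U\ps{T}(P_\lambda(T))\neq P_\lambda(T)$ in general (equality would force the $a_n$ to be constants). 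The correct computation, using that $\tilde{\theta}_U\ps{T}$ is a continuous ring homomorphism, is $\tilde{\theta}_U\ps{T}\bigl(\tilde{\theta}_T(f)\bigr)=\sum_k\tilde{\theta}_U\bigl(\th{k}(f)\bigr)\bigl(\tilde{\theta}_U\ps{T}(P_\lambda(T))\bigr)^k$, and after the iterativity collapse this is $\sum_m\th{m}(f)\bigl(P_\lambda(U)+\tilde{\theta}_U\ps{T}(P_\lambda(T))\bigr)^m$ — not substitution of $P_\lambda(U)+P_\lambda(T)$ into $\theta$. This is precisely why condition (2) features $\tilde{\theta}_U\ps{T}(P_\lambda(T))$ rather than $P_\lambda(T)$. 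You notice the mismatch midway (the ``wait'' passage) but never resolve it: the attempted reconciliation via $\tilde{\theta}_U\ps{T}(P_\lambda(T))=\sum_i\th{i}(P_\lambda(T))P_\lambda(U)^i$ and the subsequent recollection argument are garbled, and both directions of your equivalence are then argued from the faulty identity, so neither is established.

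There is a second, smaller gap in your (1)$\Rightarrow$(2) direction even after correcting the identity. You propose to ``match coefficients of the first non-vanishing $\th{p^d}(f_0)$'' and invoke a vague freeness statement, but the terms with $\th{jp^d}(f_0)$, $j\geq 2$, multiply higher powers of power series whose monomials interlace with those of the $j=1$ term, so naive coefficient comparison does not isolate anything. The actual argument (as in the paper) writes the iterativity defect as $\sum_{j\geq 1}\th{jp^d}(f)\bigl(P_\lambda(U+T)^j-Q(U,T)^j\bigr)^{p^d}$ with $Q(U,T)=P_\lambda(U)+\tilde{\theta}_U\ps{T}(P_\lambda(T))$, factors out $\bigl(P_\lambda(U+T)-Q(U,T)\bigr)^{p^d}$, and observes that the cofactor is a unit in $F\ps{U,T}$ because both $P_\lambda(U+T)$ and $Q(U,T)$ lie in the maximal ideal $(U,T)$, so its constant term is $\th{p^d}(f)\neq 0$. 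Your instinct that non-triviality of $\theta$ and Lemma~\ref{lem:existence-of-d} are exactly what is needed here is right, but you would need to supply this factorization-plus-unit argument (or an equivalent lowest-order-term argument) to close the step.
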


\begin{proof}
As $\theta$ is an iterative derivation, and $\lambda$ is a homomorphism of $F$-algebras, $\tilde{\theta}$ is a ring homomorphism and satisfies $\tilde{\theta}^{(0)}=\id_F$. Hence, $\tilde{\theta}$ is an iterative derivation if and only if the iteration rule $\tilde{\theta}_U\ps{T}\circ \tilde{\theta}_T
= \tilde{\theta}_{U+T}$ holds.

We compute both sides in terms of $\theta$ and $P_\lambda$. We have for $f\in F$:
\begin{eqnarray*}
\left(\tilde{\theta}_U\ps{T}\circ \tilde{\theta}_T\right)(f) &=& \tilde{\theta}_U\ps{T}\left( \sum_{k=0}^\infty \theta^{(k)}(f)P_\lambda(T)^k \right)\\
&=& \sum_{k=0}^\infty \tilde{\theta}_U\bigl( \theta^{(k)}(f) \bigr) \left( \tilde{\theta}_U\ps{T}(P_\lambda(T))\right)^k \\
&=&  \sum_{k,n=0}^\infty \theta^{(n)}(\theta^{(k)}(f)) P_\lambda(U)^n \left( \tilde{\theta}_U\ps{T}(P_\lambda(T))\right)^k \\
&=&  \sum_{k,n=0}^\infty \binom{n+k}{n} \theta^{(n+k)}(f) P_\lambda(U)^n \left( \tilde{\theta}_U\ps{T}(P_\lambda(T))\right)^k \\
&=&  \sum_{m=0}^\infty  \theta^{(m)}(f) \sum_{n=0}^m \binom{m}{n} P_\lambda(U)^n \left( \tilde{\theta}_U\ps{T}(P_\lambda(T))\right)^{m-n} \\
&=& \sum_{m=0}^\infty  \theta^{(m)}(f) \left( P_\lambda(U) + \tilde{\theta}_U\ps{T}(P_\lambda(T))\right)^m,
\end{eqnarray*}
where we used iterativity of $\theta$ in the forth equality. The term $ \tilde{\theta}_{U+T}(f)$ is given as
\begin{eqnarray*}
 \tilde{\theta}_{U+T}(f) &=& \sum_{m=0}^\infty  \theta^{(m)}(f) P_\lambda(U+T)^m.
\end{eqnarray*}
Hence, $ \tilde{\theta}$ is iterative if and only if for all $f\in F$,
\begin{eqnarray*}
0 &=& \sum_{m=1}^\infty  \theta^{(m)}(f) \left( P_\lambda(U+T)^m - Q(U,T)^m \right) 
\end{eqnarray*}
where $Q(U,T):=P_\lambda(U) + \tilde{\theta}_U\ps{T}(P_\lambda(T))$.

If $f\in F$ is not constant, let $d\geq 0$ such that $\th{p^d}(f)\neq 0$, and that $\th{m}(f)=0$ for all $m$ which are not divisible by $p^d$, as in Lemma \ref{lem:existence-of-d}. Then
\begin{eqnarray*}
0 &=&  \sum_{m=1}^\infty  \theta^{(m)}(f) \left( P_\lambda(U+T)^m - Q(U,T)^m \right) \\ &=&
  \sum_{j=1}^\infty  \theta^{(jp^d)}(f) \left( P_\lambda(U+T)^{jp^d} - Q(U,T)^{jp^d} \right) \\
  &=&  \sum_{j=1}^\infty  \theta^{(jp^d)}(f) \left( P_\lambda(U+T)^{j} - Q(U,T)^{j} \right)^{p^d} \\
  &=& \bigl( P_\lambda(U+T) - Q(U,T) \bigr)^{p^d} \\ & & \qquad \cdot 
  \left( \theta^{(p^d)}(f)+  \sum_{j=2}^\infty  \theta^{(jp^d)}(f) \left(  \sum_{i=0}^{j-1}
  P_\lambda(U+T)^{j-1-i} Q(U,T)^{i}  \right)^{\!\!p^d}  \right).
\end{eqnarray*}
As $P_\lambda(U+T)$ and $Q(U,T)$ are both in the maximal ideal $(U,T)F\ps{U,T}$, the right factor in the last line is invertible in $F\ps{U,T}$. Hence, the iteration rule holds at a non-constant element $f\in F$, if and only if $P_\lambda(U+T) - Q(U,T)=0$.

This shows the equivalence of the two conditions.
\end{proof}

\begin{exmp}
We consider an arbitrary non-trivial iterative derivation $\theta$ on $F$, and 
the homomorphism $\lambda:F\ps{T}\to F\ps{T}$ given by $\lambda(T)=T^{p^d}$ for some $d\geq 1$, as well as  $\tilde{\theta}:=\lambda\circ \theta$.  Then $P_\lambda(T)= T^{p^d}$, and 
$ \tilde{\theta}_U\ps{T}(P_\lambda(T))=\tilde{\theta}_U\ps{T}(T^{p^d})=T^{p^d}$ independent of $\theta$. Hence,
\[  P_\lambda(U+T)=(U+T)^{p^d} =U^{p^d}+T^{p^d}=P_\lambda(U)+ \tilde{\theta}_U\ps{T}(P_\lambda(T)). \]
Therefore, by the previous lemma, the map $ \tilde{\theta}$  is an iterative derivation.

Be aware that here $\theta$ and $\tilde{\theta}$ are not equivalent, as $\lambda$ is not an isomorphism.
\end{exmp}

\begin{prop}\label{prop:lambda-unique}
Let $F$ be a field and $\theta$ and $\tilde{\theta}$ be two equivalent non-trivial iterative derivations on $F$. Then the isomorphism $\lambda$ in Definition \ref{def:equivalent-IDs} is unique.
\end{prop}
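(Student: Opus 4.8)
The plan is to suppose we have two isomorphisms $\lambda_1,\lambda_2:F\ps{T}\to F\ps{T}$ with $\tilde\theta = \lambda_1\circ\theta = \lambda_2\circ\theta$, and to show $\lambda_1 = \lambda_2$. Since both are continuous $F$-algebra homomorphisms, it suffices to show $P_{\lambda_1}(T) = P_{\lambda_2}(T)$. The key leverage is that $\theta$ is non-trivial: by Lemma \ref{lem:existence-of-d}(2) there is a non-constant $f\in F$ and an integer $d\geq 0$ with $\th{p^d}(f)\neq 0$. Applying the two expressions for $\tilde\theta$ to this $f$, we get
\[
\sum_{k=0}^\infty \th{k}(f)\,P_{\lambda_1}(T)^k \;=\; \tilde\theta(f) \;=\; \sum_{k=0}^\infty \th{k}(f)\,P_{\lambda_2}(T)^k,
\]
so $\sum_{k=1}^\infty \th{k}(f)\bigl(P_{\lambda_1}(T)^k - P_{\lambda_2}(T)^k\bigr)=0$ in $F\ps{T}$.

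The second step is to extract from this identity that $P_{\lambda_1}(T)=P_{\lambda_2}(T)$, using exactly the same factorization trick as in the proof of Lemma \ref{lem:cond-on-lambda}. Writing $A:=P_{\lambda_1}(T)$, $B:=P_{\lambda_2}(T)$, both in $TF\ps{T}$, the nonzero terms occur only at indices $jp^d$, and $A^{jp^d}-B^{jp^d} = (A^j - B^j)^{p^d} = (A-B)^{p^d}\bigl(\sum_{i=0}^{j-1}A^{j-1-i}B^i\bigr)^{p^d}$. Factoring out $(A-B)^{p^d}$, the identity becomes
\[
(A-B)^{p^d}\left(\th{p^d}(f) + \sum_{j=2}^\infty \th{jp^d}(f)\Bigl(\sum_{i=0}^{j-1}A^{j-1-i}B^i\Bigr)^{p^d}\right) = 0.
\]
Since $A,B\in TF\ps{T}$, the parenthesized factor has constant term $\th{p^d}(f)\neq 0$, hence is a unit in $F\ps{T}$. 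Therefore $(A-B)^{p^d}=0$, and as $F\ps{T}$ is an integral domain, $A=B$, i.e.\ $P_{\lambda_1}(T)=P_{\lambda_2}(T)$, whence $\lambda_1=\lambda_2$.

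I don't anticipate a genuine obstacle here: the argument is essentially a streamlined reuse of the computation already carried out in Lemma \ref{lem:cond-on-lambda}, now applied to the difference of two candidate $\lambda$'s rather than to the iteration-rule discrepancy. The only point to be careful about is the appeal to non-triviality — without it no such $f$ exists and the statement genuinely fails, since then $F\ps{T}$ admits many automorphisms fixing the (trivial) $\theta$. One could alternatively phrase the proof by noting that $\lambda$ is determined on the image of $\theta$, and that the image of a non-trivial $\theta$ generates a subalgebra of $F\ps{T}$ whose closure contains enough elements (in fact, contains $P_\lambda(T)$ itself, by the displayed formula) to pin down $\lambda$ uniquely; but the direct computation above is cleaner and I would present that.
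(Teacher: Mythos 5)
Your proof is correct, and it is worth noting how it differs from the paper's. Both arguments rest on the same input: Lemma \ref{lem:existence-of-d} supplies $d$ and a non-constant $f$ with $\th{p^d}(f)\neq 0$ and all other non-vanishing components indexed by multiples of $p^d$, and everything hinges on the invertibility of $\th{p^d}(f)$ in the identity $\tilde{\theta}(f)=\sum_k \th{k}(f)P_\lambda(T)^k$. The mechanisms differ: the paper works with a single $\lambda$ and rearranges this identity into a recursion that determines the coefficients of $P_\lambda(T)^{p^d}$ (and hence of $P_\lambda(T)$, by extracting $p^d$-th roots of the coefficients) from $\theta(f)$ and $\tilde{\theta}(f)$; you instead compare two candidates $\lambda_1,\lambda_2$, subtract, and reuse the factorization-plus-unit trick from the proof of Lemma \ref{lem:cond-on-lambda} to get $(P_{\lambda_1}(T)-P_{\lambda_2}(T))^{p^d}\cdot(\text{unit})=0$ in the domain $F\ps{T}$. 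Your route is a bit cleaner in that the well-foundedness is explicit (no appeal to ``a recursive formula'' and no separate root-extraction step), while the paper's version has the merit of exhibiting $P_\lambda$ as effectively computable from $\theta$ and $\tilde{\theta}$. Two small remarks: what you actually use is that $\th{m}(f)=0$ whenever $p^d\nmid m$, so either cite part (1) of Lemma \ref{lem:existence-of-d} for your chosen $f$, or, if you invoke part (2), say explicitly that you pick $f$ with $\th{p^d}(f)\neq 0$ and use the global vanishing of the other components (which is what your argument in effect does); and, exactly as in the paper's proof, the argument (Frobenius, Lemma \ref{lem:existence-of-d}) uses the standing assumption of positive characteristic.
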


\begin{proof}
Let $f\in F$ be a non-constant element, and let $d\geq 0$ such that $\th{p^d}(f)\neq 0$, and that $\th{m}(f)=0$ for all $m$ which are not divisible by $p^d$, as in Lemma \ref{lem:existence-of-d}. 
 Then from the equality
\[  \sum_{n=1}^\infty \tilde{\theta}^{(n)}(f)T^n =  \sum_{k=1}^\infty \theta^{(k)}(f)P_\lambda(T)^{k}
= \th{p^d}(f)P_\lambda(T)^{p^d} + \sum_{k=2}^\infty \theta^{(jp^d)}(f)P_\lambda(T)^{jp^d}, \]
 one obtains
 \[ P_\lambda(T)^{p^d} = \th{p^d}(f)^{-1}\cdot \left(\sum_{n=1}^\infty \tilde{\theta}^{(n)}(f)T^n
 -  \sum_{j=2}^\infty \theta^{(jp^d)}(f)P_\lambda(T)^{jp^d} \right). \]
This is a recursive formula for the coefficients of $P_\lambda(T)^{p^d}$, and hence also for the coefficients of $P_\lambda(T)$. Hence, $P_\lambda(T)$ and $\lambda$ are unique.
\end{proof}

\begin{thm}\label{thm:equivalence-of-categories}
Let $F$ be a field and $\theta$ and $\tilde{\theta}$ be two equivalent non-trivial iterative derivations on $F$, as well as 
$\lambda:F\ps{T}\to F\ps{T}$ such that $\tilde{\theta}=\lambda\circ \theta$, and $P_\lambda(T):=\lambda(T)\in F\ps{T}$.
\begin{enumerate}
\item \label{item:change-of-ID}
For an ID-module $(M,\theta_M)$ over $(F,\theta)$, the $F$-vector space $M$ together with 
the map 
\begin{eqnarray*} \tilde{\theta}_M:=(\id_M\otimes \lambda)\circ \theta_M:M &\longrightarrow & M\ps{T}=M\otimes_F F\ps{T} \\  m &\longmapsto &  \sum_{k=0}^\infty \theta_M^{(k)}(m)P_\lambda(T)^k
\end{eqnarray*}
is an ID-module $(M,\tilde{\theta}_M)$ over $(F,\tilde{\theta})$.
\item The assignment $(M,\theta_M) \mapsto (M,\tilde{\theta}_M)$ of part \eqref{item:change-of-ID} induces
an equivalence of categories between  the category of ID-modules over $(F,\theta)$ and the  category of ID-modules over $(F,\tilde{\theta})$.
\end{enumerate}
\end{thm}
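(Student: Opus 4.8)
The plan is to prove both parts of Theorem~\ref{thm:equivalence-of-categories} by exhibiting $\lambda^{-1}$ as the inverse operation and checking functoriality. For part~\eqref{item:change-of-ID}, I would first verify the axioms for $(M,\tilde\theta_M)$ directly. The normalization axiom $\tilde\theta_M^{(0)}=\id_M$ is immediate since $P_\lambda(T)\in T\cdot F\ps{T}$ has zero constant term, so the constant term of $(\id_M\otimes\lambda)(\theta_M(m))$ is $\theta_M^{(0)}(m)=m$. The twisted Leibniz rule $\tilde\theta_M(rm)=\tilde\theta(r)\tilde\theta_M(m)$ follows because $\id_M\otimes\lambda$ is multiplicative over the $F\ps{T}$-module structure and $\theta_M(rm)=\theta(r)\theta_M(m)$, and $\tilde\theta=\lambda\circ\theta$ by definition. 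The only nontrivial point is the iteration rule $\tilde\theta_{M,U}\ps{T}\circ\tilde\theta_{M,T}=\tilde\theta_{M,U+T}$, and the key input here is Lemma~\ref{lem:cond-on-lambda}: since $\tilde\theta=\lambda\circ\theta$ is already known to be an iterative derivation on $F$ (it is equivalent to $\theta$), condition~(2) of that lemma gives us the crucial identity $P_\lambda(U+T)=P_\lambda(U)+\tilde\theta_U\ps{T}(P_\lambda(T))$. The computation establishing the module iteration rule is then \emph{formally identical} to the one in the proof of Lemma~\ref{lem:cond-on-lambda} — replace $\theta^{(k)}(f)$ by $\theta_M^{(k)}(m)$ throughout and use iterativity of $\theta_M$ in place of iterativity of $\theta$ — and both sides collapse to $\sum_m \theta_M^{(m)}(m)P_\lambda(U+T)^m$ precisely because of the $P_\lambda$ identity. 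So I would simply mirror that calculation.

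For part~(2), I would construct the inverse functor using $\lambda^{-1}:F\ps{T}\to F\ps{T}$, which is again a continuous $F$-algebra isomorphism. I need to know that $\lambda^{-1}\circ\tilde\theta=\theta$; this is trivial from $\tilde\theta=\lambda\circ\theta$. Applying part~\eqref{item:change-of-ID} with the roles of $\theta$ and $\tilde\theta$ swapped (and $\lambda^{-1}$ in place of $\lambda$) then sends an ID-module $(N,\tilde\theta_N)$ over $(F,\tilde\theta)$ to an ID-module $(N,\theta_N)$ over $(F,\theta)$ with $\theta_N:=(\id_N\otimes\lambda^{-1})\circ\tilde\theta_N$. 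These two assignments are mutually inverse on objects because $(\id_M\otimes\lambda^{-1})\circ(\id_M\otimes\lambda)=\id_{M\ps{T}}$. On morphisms, an $F$-linear map $\varphi:M\to N$ is an ID-homomorphism $(M,\theta_M)\to(N,\theta_N)$ iff $\varphi\ps{T}\circ\theta_M=\theta_N\circ\varphi$ iff (composing with $\id\otimes\lambda$ on the left and using that $\id\otimes\lambda$ commutes with $\varphi\ps{T}$, since $\varphi$ is $F$-linear and $\lambda$ is $F$-algebra) $\varphi\ps{T}\circ\tilde\theta_M=\tilde\theta_N\circ\varphi$, i.e.~$\varphi$ is an ID-homomorphism $(M,\tilde\theta_M)\to(N,\tilde\theta_N)$. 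Hence the Hom-sets literally coincide, and the functor is fully faithful; together with essential surjectivity (witnessed by the inverse assignment) this gives an equivalence of categories — in fact an isomorphism of categories on the nose.

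The step I expect to require the most care is the commutation $(\id_N\otimes\lambda)\circ\varphi\ps{T}=\varphi\ps{T}\circ(\id_M\otimes\lambda)$ used for morphisms, and the analogous assertion that $\id_M\otimes\lambda$ is multiplicative for the $F\ps{T}$-structure that appears in the twisted Leibniz rule. Both hinge on $\lambda$ being a \emph{continuous $F$-algebra} homomorphism and $\varphi$ being $F$-linear: on a pure tensor $m\otimes g$ with $g\in F\ps{T}$ one has $(\id_N\otimes\lambda)(\varphi(m)\otimes g)=\varphi(m)\otimes\lambda(g)$ and $\varphi\ps{T}((\id_M\otimes\lambda)(m\otimes g))=\varphi\ps{T}(m\otimes\lambda(g))=\varphi(m)\otimes\lambda(g)$, and one extends by continuity/linearity. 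Once this bookkeeping is in place the rest is formal. I would also remark at the end that the equivalence is compatible with tensor products, internal Homs, and the forgetful functor to $F$-vector spaces, so it matches up Tannakian categories and hence Picard--Vessiot rings and Galois group schemes — which is exactly what is needed for the inverse Galois problem to be unchanged under passing to an equivalent iterative derivation.
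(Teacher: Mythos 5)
Your proposal is correct and follows essentially the same route as the paper: part (1) is verified directly, with the iteration rule reduced via Lemma~\ref{lem:cond-on-lambda} (repeating its computation with $\theta_M^{(k)}(m)$ in place of $\theta^{(k)}(f)$), and part (2) uses the inverse assignment coming from $\lambda^{-1}$ together with the observation that $\id\otimes\lambda$ commutes with $\varphi\ps{T}$, exactly as in the paper's diagram argument. The only blemish is notational (the index clash in ``$\sum_m\theta_M^{(m)}(m)$''), not mathematical.
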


\begin{proof}
For the first part, we have to show that $\tilde{\theta}_M$ is an iterative derivation over $\tilde{\theta}$.
Clearly,
\[  \tilde{\theta}_M^{(0)}(m)=\theta_M^{(0)}(m)=m \]
for all $m\in M$. Furthermore, for $f\in F$, $m\in M$:
\begin{eqnarray*}
\tilde{\theta}_M(fm) &=& (\id_M\otimes \lambda)\left( \theta(f)\cdot \theta_M(m)\right) \\
&=& (\lambda\circ \theta)(f)\cdot ( (\id_M\otimes \lambda)\circ \theta_M)(m) \\
&=& \tilde{\theta}(f)\cdot \tilde{\theta}_M(m).
\end{eqnarray*}
Finally, we have to show that $\tilde{\theta}_{M,U}\ps{T}\circ \tilde{\theta}_{M,T}$ equals $\tilde{\theta}_{M,U+T}$. By the same computation as in the proof of Lemma~\ref{lem:cond-on-lambda},
using that $\theta_M$ is iterative, we obtain for $m\in M$:
\[ \left( \tilde{\theta}_{M,U}\ps{T}\circ \tilde{\theta}_{M,T}\right)(m) = 
\sum_{k=0}^\infty  \theta_M^{(k)}(m) \left( P_\lambda(U) + \tilde{\theta}_U\ps{T}(P_\lambda(T))\right)^k, \]
as well as
\[  \tilde{\theta}_{M,U+T}(m) = \sum_{k=0}^\infty  \theta_M^{(k)}(m) P_\lambda(U+T)^k. \]
As $\tilde{\theta}$ is an iterative derivation, these are equal by Lemma~\ref{lem:cond-on-lambda}.

For the second part, we first recognize that the assignment is a bijection between ID-modules over $(F,\theta)$ and ID-modules over $(F,\tilde{\theta})$, as one has an inverse assignment by changing the roles of $\theta$ and $\tilde{\theta}$. We have to check that a homomorphism $\varphi\in \Hom_F(M,N)$ which is an ID-homomorphism $(M,\theta_M)\to  (N,\theta_N)$ also is an ID-homomorphism 
$(M,\tilde{\theta}_M)\to (N,\tilde{\theta}_N)$. If $\varphi$ is an ID-homomorphism from $(M,\theta_M)$ to $(N,\theta_N)$, the left square of the diagram below commutes. The right square of this diagram commutes, since both compositions equal $\varphi\otimes \lambda$. Hence, the whole diagram commutes which means that $\varphi$ is also an ID-homomorphism between $(M,\tilde{\theta}_M)$ and $(N,\tilde{\theta}_N)$.
\[
\centerline{\xymatrix@+10pt{
M \ar[r]^(.4){\theta_M} \ar[d]_{\varphi}& M\ps{T} \ar[r]^{\id_M\otimes \lambda} \ar[d]_{\varphi\ps{T}} 
\ar@{-->}[dr]^{\varphi\otimes \lambda}
& M\ps{T} \ar[d]^{\varphi\ps{T}}  \\
N \ar[r]^(.4){\theta_N} & N\ps{T} \ar[r]^{\id_N\otimes \lambda} & N\ps{T}   
}}
\]

As the assignment is clearly functorial, this finishes the proof.
\end{proof}

In order to handle the inverse Galois problem, the previous proposition allows us to replace a given iterative derivation by an equivalent one, as the categories of iterative differential modules above them are equivalent.

The next proposition provides us with a ``nice'' iterative derivation in those equivalence classes for which $\th{1}\ne 0$.

\begin{prop}\label{prop:extension-of-id-by-t}
Let $(F,\theta)$ be an iterative differential field with field of constants $C$. Assume that $\th{1}\neq 0$, and let $t\in F$ with $\th{1}(t)\neq 0$.
Further, we define a continuous isomorphism of $F$-algebras $\mu:F\ps{T}\to F\ps{T}$ by $\mu(T)=\theta(t)-t$, and let $\lambda:=\mu^{-1}$.

Then the homomorphism $\tilde{\theta}:=\lambda\circ \theta:F\to F\ps{T}$ is an iterative derivation on $F$, and 
$\tilde{\theta}$ extends the iterative derivation by $t$ on $C(t)$, i.e.~$\tilde{\theta}(t)=t+T$.
\end{prop}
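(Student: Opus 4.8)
The plan is to verify the two assertions of the proposition in turn: first that $\tilde\theta=\lambda\circ\theta$ is genuinely an iterative derivation, and second that it satisfies the normalization $\tilde\theta(t)=t+T$. For the first assertion I would invoke Lemma~\ref{lem:cond-on-lambda}: since $\theta$ is non-trivial and $\lambda$ is a continuous $F$-algebra isomorphism, it suffices to check condition~(2) of that lemma, namely $P_\lambda(U+T)=P_\lambda(U)+\tilde\theta_U\ps{T}(P_\lambda(T))$. The clean way to get this is to prove the equivalent identity for $\mu=\lambda^{-1}$ and then transport it. Write $P_\mu(T)=\mu(T)=\theta_T(t)-t=\sum_{k\ge 1}\th{k}(t)T^k$. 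Because $\theta$ itself is an iterative derivation, the iteration rule $\theta_U\ps{T}\circ\theta_T=\theta_{U+T}$ applied to $t$ gives $\theta_U\ps{T}(\theta_T(t))=\theta_{U+T}(t)$; subtracting $t$ from both sides and rearranging yields $P_\mu(U+T)=\theta_{U+T}(t)-t=\theta_U\ps{T}(\theta_T(t))-t$, and one massages this into the shape $P_\mu(U+T)=P_\mu(U)+\theta_U\ps{T}(P_\mu(T))$ using $\theta_U(t)=t+P_\mu(U)$ and $\theta^{(0)}_U=\id$. In other words, $\mu$ (built from $\theta$) already satisfies the cocycle-type relation with respect to $\theta$; one then checks that this relation for $(\mu,\theta)$ is formally equivalent, via composing with $\lambda$, to condition~(2) of Lemma~\ref{lem:cond-on-lambda} for $(\lambda,\theta)$, which then forces $\tilde\theta$ to be iterative.

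An alternative and perhaps more conceptual route for the first part is to observe that $\mu$ is precisely the substitution $T\mapsto \theta_T(t)-t$ that converts the "standard" picture into the $\theta$-picture: if one defines a candidate iterative derivation $\tilde\theta$ by demanding $\tilde\theta(t)=t+T$ on $C(t)$ and extends, then $\theta=\mu\circ\tilde\theta$ is exactly the recipe by which $\tilde\theta$ and $\theta$ would be related as equivalent iterative derivations. Concretely, once we know $\tilde\theta$ is iterative, $\tilde\theta(t)=\lambda(\theta(t))=\lambda(t+\mu(T))=t+\lambda(\mu(T))=t+T$ since $\lambda=\mu^{-1}$ and $\lambda$ is $F$-linear; so the second assertion is essentially immediate \emph{granted} the first, and the real content is the first assertion. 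I would present the second assertion as a one-line consequence: $\tilde\theta(t)=(\lambda\circ\theta)(t)=\lambda(t\cdot T^0+\mu(T))=t\cdot T^0+T$, hence $\tilde\theta^{(1)}(t)=1$, $\tilde\theta^{(k)}(t)=0$ for $k\ge 2$, which is exactly the iterative derivation $\theta_t$ on $C(t)$ restricted to $t$; by Lemma~\ref{lem:algebraic-extension} (or directly, since $C(t)\subseteq F$ and $\tilde\theta$ restricts to an iterative derivation on $C(t)$ with $\tilde\theta(t)=t+T$) this restriction is the standard iterative derivation.

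I should double-check one potential subtlety: $\mu$ is a \emph{well-defined continuous isomorphism} of $F\ps{T}$. We need $\mu(T)=\theta(t)-t\in TF\ps{T}$, i.e.\ its constant term vanishes, which holds since $\theta^{(0)}(t)=t$; and we need its linear coefficient to be a unit, i.e.\ $\theta^{(1)}(t)\neq 0$, which is exactly the hypothesis $\th{1}(t)\ne 0$. So $\mu$ indeed lies in the group of continuous $F$-algebra automorphisms of $F\ps{T}$ described in the remark after Definition~\ref{def:equivalent-IDs}, and $\lambda=\mu^{-1}$ makes sense.

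The main obstacle I anticipate is making the passage from "the cocycle relation for $(\mu,\theta)$ (which comes for free from iterativity of $\theta$ applied to $t$)" to "condition~(2) of Lemma~\ref{lem:cond-on-lambda} for $(\lambda,\theta)$" fully rigorous without re-deriving everything by hand. The cleanest formalization is probably to note that $\tilde\theta$ being iterative is equivalent (by the same computation as in Lemma~\ref{lem:cond-on-lambda}, now read with $\tilde\theta$ and $P_\lambda$) to $P_\lambda(U+T)=P_\lambda(U)+\tilde\theta_U\ps{T}(P_\lambda(T))$, and then to verify this identity directly by substituting $T\mapsto P_\mu(T)$, $U\mapsto P_\mu(U)$ into it and checking it reduces to the already-established relation $P_\mu(U+T)=P_\mu(U)+\theta_U\ps{T}(P_\mu(T))$, using that $\tilde\theta_U\ps{T}\circ\mu_U = \lambda_U\circ\theta_U\ps{T}\circ$ (matching of the substitutions) — in effect, that $\lambda$ intertwines the $\theta$-action and the $\tilde\theta$-action on power series. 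Once that bookkeeping of substitutions is pinned down the proof is short; everything else is routine.
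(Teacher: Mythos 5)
Most of your outline is fine and matches the easy parts of the paper's argument: the check that $\mu$ is a well-defined continuous automorphism (constant term $0$, linear coefficient $\th{1}(t)\neq 0$), the one-line computation $\tilde{\theta}(t)=\lambda(t+\mu(T))=t+T$, the relation $P_\mu(U+T)=P_\mu(U)+\theta_U\ps{T}(P_\mu(T))$ obtained from iterativity of $\theta$ at $t$, and the reduction of iterativity of $\tilde{\theta}$ to condition (2) of Lemma~\ref{lem:cond-on-lambda} are all correct. The genuine gap is exactly the step you flag as the ``main obstacle'' and then do not close: the transport from the $\mu$-relation to condition (2) for $\lambda$. ``Composing with $\lambda$'' does not do it, and neither does your proposed symmetric substitution $U\mapsto P_\mu(U)$, $T\mapsto P_\mu(T)$: applied to $P_\lambda(U+T)=P_\lambda(U)+\tilde{\theta}_U\ps{T}(P_\lambda(T))$ it turns the left-hand side into $P_\lambda\bigl(P_\mu(U)+P_\mu(T)\bigr)$, which does not collapse to $U+T$, because $P_\mu(U)+P_\mu(T)\neq P_\mu(U+T)$ in general (the defect is precisely $\theta_U\ps{T}(P_\mu(T))-P_\mu(T)$). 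Moreover the intertwining identity you invoke, ``$\tilde{\theta}_U\ps{T}\circ\mu_U=\lambda_U\circ\theta_U\ps{T}\circ\cdots$'', is not even well-typed: $\mu_U$ acts on $F\ps{U}$ while the source of $\tilde{\theta}_U\ps{T}$ is $F\ps{T}$. So as written the decisive identity is not established.

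The repair is the asymmetric substitution $\nu:F\ps{U,T}\to F\ps{U,T}$ with $U\mapsto P_\mu(U)$ and $T\mapsto\theta_U\ps{T}(P_\mu(T))$, which is an automorphism by the formal inverse function theorem since both images are congruent to $\th{1}(t)\,U$, resp.\ $\th{1}(t)\,T$, modulo $(U,T)^2$ (this is where $\th{1}(t)\neq 0$ enters a second time). The correct intertwining is $\nu\circ\tilde{\theta}_U\ps{T}=\theta_U\ps{T}\circ\mu$ on $F\ps{T}$: both are continuous ring homomorphisms, on a coefficient $c\in F$ both give $\theta_U(c)$ (using $\nu(P_\lambda(U))=P_\lambda(P_\mu(U))=U$), and on $T$ both give $\theta_U\ps{T}(P_\mu(T))$ by definition of $\nu$. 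Then $\nu$ applied to the right-hand side of condition (2) gives $U+\theta_U\ps{T}(\mu(P_\lambda(T)))=U+\theta_U\ps{T}(T)=U+T$, while $\nu$ applied to the left-hand side gives $P_\lambda\bigl(P_\mu(U)+\theta_U\ps{T}(P_\mu(T))\bigr)=P_\lambda(P_\mu(U+T))=U+T$ by your $\mu$-relation; injectivity of $\nu$ yields condition (2), hence iterativity of $\tilde{\theta}$ by Lemma~\ref{lem:cond-on-lambda}. Note that this $\nu$ and the identity $\nu\circ\tilde{\theta}_U\ps{T}=\theta_U\ps{T}\circ\mu$ are precisely the auxiliary isomorphism and the quadrilaterals 4 and 5 in the paper's proof, which deliberately bypasses the criterion of Lemma~\ref{lem:cond-on-lambda} (the paper remarks it is ``not so easy to check'') and verifies the iteration rule by a diagram chase instead. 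So your route can be completed and is essentially a linearized version of the paper's argument, but the missing ingredient is exactly the paper's map $\nu$; without it, your transport step fails.
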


\begin{proof}
The second claim is immediate from the definitions. Namely,
\[ \mu\left( \tilde{\theta}(t)\right) = \theta(t)=t+\mu(T)=\mu(t+T), \]
and hence, $\tilde{\theta}(t)=t+T$, because $\mu$ is an isomorphism.

It remains to show that $\tilde{\theta}$ is an iterative derivation. As in Lemma \ref{lem:cond-on-lambda}, the properties $\tilde{\theta}^{(0)}=\id_F$  and that $\tilde{\theta}$ is a ring homomorphism are clear.
The condition for iterativity given there, however, is not so easy to check. So we proceed differently.

We define the continuous homomorphism of $F$-algebras
\begin{eqnarray*}
\nu:F\ps{U,T}&\longrightarrow & F\ps{U,T} \quad \text{by} \\
U &\longmapsto & P_\mu(U)=\theta_U(t)-t\\
 T &\longmapsto & \theta_U\ps{T}(P_\mu(T)).
\end{eqnarray*}
The map $\nu$  actually is an isomorphism by the formal inverse function theorem (cf.~\cite[Theorem 1.1.2]{avde:pajc}), since $\nu(U)\equiv \theta^{(1)}(t)\cdot U \mod{(U,T)^2}$ and
$\nu(T)\equiv \theta^{(1)}(t)\cdot T \mod{(U,T)^2}$.

We then have to show that the big square in the following diagram commutes:

\centerline{\xymatrix@C+15pt{
F \ar[rrr]^{\tilde{\theta}_U} \ar@{=}[rd] \ar[ddd]_{\tilde{\theta}_T}& \ar @{} [dr] |(.4)*+[o][F-]{1} & & F\ps{U} \ar[dl]^{\mu_U} \ar[ddd]^{U\mapsto U+T} \\
\ar @{} [dr] |(.4)*+[o][F-]{2} & F  \ar[r]^{\theta_U} \ar[d]_{\theta_T} \ar @{} [dr] |(.4)*+[o][F-]{3} & F\ps{U} \ar[d]^(.55){U\mapsto U+T} \ar @{} [dr] ^(.4)*+[o][F-]{4} &  \\
 & F\ps{T} \ar[r]^(.45){\theta_U\ps{T}} \ar @{} [dr] |(.4)*+[o][F-]{5} &  F\ps{U,T} & \\
 F\ps{T} \ar[ur]^{\mu} \ar[rrr]^{\tilde{\theta}_U\ps{T}} & &  & F\ps{U,T} \ar[ul]^{\nu}
}
}
\noindent where $\mu_U:F\ps{U}\to F\ps{U},U\mapsto P_\mu(U)$ is the map $\mu$ with $T$ replaced by $U$ both in the source and the domain.

As $\nu$ is an isomorphism, it suffices to prove that all the smaller quadrilaterals commute.

The quadrilaterals $1$ and $2$ commute by definition of $\tilde{\theta}$, and quadrilateral $3$ commutes, since $\theta$ is an iterative derivation. The maps building quadrilateral $4$ are all continuous homomorphisms of $F$-algebras, and therefore we just have to check that both compositions agree on $U$:
\[  \mu_U(U)|_{U\mapsto U+T}=(\theta_U(t)-t)|_{U\mapsto U+T}=\theta_{U+T}(t)-t \]
and
\begin{eqnarray*}
\nu(U+T) &=& P_\mu(U)+  \theta_U\ps{T}(P_\mu(T))=\theta_U(t)-t + \theta_U\ps{T}(\theta_T(t)-t) \\
&=& \theta_U(t)-t + \theta_{U+T}(t)- \theta_U(t) =\theta_{U+T}(t)-t,
\end{eqnarray*}
where we  used $\theta_U\ps{T}(\theta_T(t))=\theta_{U+T}(t)$ by iterativity of $\theta$. So quadrilateral $4$ also commutes, and we are left with quadrilateral $5$. As both compositions are continuous ring homomorphisms, it suffices to check commutativity for $T$ and for elements $f\in F$.

By definition of $\nu$, we have
\[  \nu(\tilde{\theta}_U\ps{T}(T))=\nu(T)=\theta_U\ps{T}(P_\mu(T))=(\theta_U\ps{T}\circ \mu)(T). \]
Finally, as $\nu|_{F\ps{U}}=\mu_U$, and $\mu|_F=\id_F$, 
\[ \nu(\tilde{\theta}_U\ps{T}(f))=\nu(\tilde{\theta}_U(f)) 
=(\mu_U\circ \tilde{\theta}_U)(f)=\theta_U(f)=\theta_U\ps{T}(\mu(f)). \qedhere \]
\end{proof}

\section{Inverse Galois problem for reduced group schemes}\label{sec:inverse-galois-problem}

In this section, the main result is the following

\begin{thm}\label{thm:realisation}
Let $(L,\theta)$ be an  iterative differential field of positive characteristic $p$
 with algebraically closed field of constants $C$ such that $L$ is finitely generated over $C$, and $L\neq C$. Then every reduced affine algebraic group scheme over $C$ is the Galois group of some iterative differential module over $(L,\theta)$.\end{thm}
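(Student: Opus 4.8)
The plan is to reduce, in two steps, to the known realization of every reduced affine algebraic group scheme over the standard ID-field $(C(t),\theta_t)$ from \cite[Thm.~8.8]{bhm-mvdp:cdgt}, and then to transport that realization up from $C(t)$ to $L$ by base change, adapting the characteristic-zero argument of \cite[Cor.~4.13]{ab-dh-jh:dgglsf}. \emph{First, I would reduce to the case $\th{1}\neq 0$.} Since $C=L^\theta\neq L$, the derivation $\theta$ is non-trivial, so Lemma~\ref{lem:existence-of-d}(2) yields $d\geq 0$ with $\th{p^d}\neq 0$ and $\th{m}=0$ for $p^d\nmid m$. Putting $\eta^{(k)}:=\th{kp^d}$ and using $\binom{(i+j)p^d}{ip^d}\equiv\binom{i+j}{i}\pmod p$, one sees that $\eta$ is an iterative derivation on $L$ with $\eta^{(1)}\neq 0$ and $L^\eta=L^\theta=C$, and that $\theta=\lambda\circ\eta$ for the continuous (non-invertible) $C$-algebra endomorphism $\lambda$ of $L\ps{T}$ with $\lambda(T)=T^{p^d}$ (as in the example following Lemma~\ref{lem:cond-on-lambda}). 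Then I would check that the assignment $(M,\eta_M)\mapsto(M,\theta_M)$ with $\theta_M^{(kp^d)}:=\eta_M^{(k)}$ and $\theta_M^{(m)}:=0$ for $p^d\nmid m$ (again an ID-module, by the same congruence) sends Picard-Vessiot rings over $(L,\eta)$ to Picard-Vessiot rings over $(L,\theta)$, preserving constants, ID-simplicity, minimality and the representing algebra $(R\otimes_L R)^\theta=(R\otimes_L R)^\eta$, hence the Galois group. So it suffices to realize every reduced group scheme over $(L,\eta)$, and, after renaming, I may assume $\th{1}\neq 0$.

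\emph{Next, I would reduce to the case $(C(t),\theta_t)\subseteq(L,\theta)$.} Choose $t\in L$ with $\th{1}(t)\neq 0$; then $t$ is transcendental over $C$, and Proposition~\ref{prop:extension-of-id-by-t} gives an iterative derivation $\tilde\theta$ on $L$, equivalent to $\theta$, with $\tilde\theta(t)=t+T$. Since $\tilde\theta$ is a ring homomorphism fixing $C$, it restricts on $C(t)$ to the standard iterative derivation $\theta_t$, so $(C(t),\theta_t)\hookrightarrow(L,\tilde\theta)$, with common field of constants $C$ and with $L$ finitely generated over $C(t)$. By Theorem~\ref{thm:equivalence-of-categories} the categories of ID-modules over $(L,\theta)$ and over $(L,\tilde\theta)$ are equivalent, and, exactly as in the first reduction, transporting a Picard-Vessiot ring through the defining isomorphism $\lambda$ preserves constants, ID-simplicity, minimality and the representing algebra, hence the Galois group. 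So I may assume $\theta$ extends $\theta_t$ and $C(t)\subseteq L$.

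\emph{The main step is the base change.} Let $G$ be a reduced affine algebraic group scheme over $C$; since $C$ is algebraically closed, $G$ is smooth. By \cite[Thm.~8.8]{bhm-mvdp:cdgt} there is a Picard-Vessiot ring $R_0$ over $(C(t),\theta_t)$ with $\uGal(R_0/C(t))\isom G$, and by \eqref{eq:torsor-isom} $\Spec R_0$ is a $G$-torsor over $C(t)$. I would show that $R:=R_0\otimes_{C(t)}L$, with the iterative derivation extending $\theta$ on $L$ and $\theta_{R_0}$ on $R_0$, is a Picard-Vessiot ring over $(L,\theta)$ with Galois group $G$: base-changing \eqref{eq:torsor-isom} gives $R\otimes_C C[G]\isom R\otimes_L R$, so $\Spec R$ is a $G$-torsor over $L$ and $R$ splits $M_0\otimes_{C(t)}L$, and it remains to see that $R$ has constants $C$ and is ID-simple, after which $\uGal(R/L)$ — a priori a closed subgroup scheme of $G$ — is forced to be all of $G$. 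Here the one genuinely positive-characteristic input is that $L/C(t)$ is separable, which I would get from the fact that $\theta$ is defined on all of $L$: $t\notin L^p$ (otherwise $\theta(t)=t+T$ would be a $p$-th power in $L\ps{T}$, impossible as its $T$-coefficient is $1$), so $1,t,\dots,t^{p-1}$ are $L^p$-linearly independent, i.e.\ $C(t)$ and $L^p$ are linearly disjoint over $C(t)^p=C(t^p)$, which is exactly separability of $L/C(t)$. Granting this and that $C$ is algebraically closed, the argument of \cite[Cor.~4.13]{ab-dh-jh:dgglsf} carries over: one obtains $R^\theta=C$, and, after arranging that the finite-\'etale $\pi_0(G)$-torsor underlying $\Spec R_0$ stays nonsplit over $L$ (possible since $C$ is algebraically closed and $L$ is only finitely generated over $C(t)$, so there is room among the $\pi_0(G)$-extensions of $C(t)$), one obtains that $R$ is ID-simple; then $R$ is the desired Picard-Vessiot ring.

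I expect the base-change step to be the main obstacle, and within it the two non-formal facts: that $-\otimes_{C(t)}L$ does not enlarge the ring of constants (which is where $C$ being algebraically closed really matters) and that it does not shrink the Galois group. The latter is where positive characteristic genuinely differs from the classical situation — separability of $L/C(t)$, automatic in characteristic zero, must be supplied by hand as above from $t\notin L^p$, and for a disconnected $G$ one must in addition choose the realization over $C(t)$ so that its \'etale part survives the base change to $L$.
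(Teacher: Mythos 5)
Your two reductions are exactly the ones the paper makes (reduce to $\th{1}\neq 0$ by rescaling exponents by $p^d$, then use Proposition~\ref{prop:extension-of-id-by-t} and Theorem~\ref{thm:equivalence-of-categories} to assume $(C(t),\theta_t)\subseteq (L,\theta)$), and they are fine. The genuine gap is in your main step: you take a \emph{single} Picard--Vessiot ring $R_0$ over $(C(t),\theta_t)$ with group $\G$ and base-change it to $R_0\otimes_{C(t)}L$. That cannot work for an arbitrary $L$ as in the theorem, and no amount of separability of $L/C(t)$ or algebraic closedness of $C$ repairs it: the statement must hold for \emph{every} finitely generated $L$ with constants $C$, and $L$ may already contain part (or all) of the Picard--Vessiot extension attached to $R_0$ --- for instance $L$ could be the Picard--Vessiot field of the very module that $R_0$ splits, which is finitely generated over $C$ and has constants $C$. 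In that case $\Quot(R_0\otimes_{C(t)}L)$ acquires new constants and the Galois group over $L$ collapses, even for connected $\G$ such as $\GG_a$ or $\GG_m$; so the failure is not confined to the $\pi_0(\G)$-part, and ``choosing the realization so that its \'etale part survives'' does not address it. Moreover the result you invoke, \cite[Cor.~4.13]{ab-dh-jh:dgglsf}, is not a base-change statement; its proof is precisely the mechanism you are missing.

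That missing mechanism is the Kovacic trick (Proposition~\ref{prop:kovacic-trick} in the paper): one uses \cite[Thm.~8.8]{bhm-mvdp:cdgt} to realize not $\G$ but the power $\G^{2d+m}$ over $(C(t),\theta_t)$, where $m=\trdeg(L/C(t))+1$ and $d$ bounds the algebraic part of $L$ over $C(t)$; by Theorem~\ref{thm:galois-correspondence} and Lemma~\ref{lem:pv-ring-is-tensor-product} the corresponding Picard--Vessiot ring decomposes as $R_1\otimes_{C(t)}\cdots\otimes_{C(t)}R_{2d+m}$ with $\uGal(R_i/C(t))\isom\G$, and then one discards at most $2d$ factors to achieve linear disjointness from the algebraic part of $L$ and triviality of the new algebraic constants (here Lemma~\ref{lem:embedding} enters), and finally a transcendence-degree count shows that among the remaining $m$ factors there is at least one index $i$ with $\Quot(R_i\otimes_{C(t)}L)^\theta=C$; for that $i$, the ring $R_i\otimes_{C(t)}L$ is a Picard--Vessiot ring over $L$ with Galois group $\G$. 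Your proposal, by fixing one realization in advance, has no analogue of this selection step, and the claims ``$R^\theta=C$'' and ``the Galois group does not shrink'' are exactly the points left unproved (and false in general for your $R$). If you replace your third step by this argument --- i.e.\ prove the Kovacic trick in the ID-setting and apply it with $F=C(t)$ --- the rest of your write-up goes through and coincides with the paper's proof.
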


The theorem is also true in characteristic zero and is proven in \cite[Thm.~4.12]{ab-dh-jh:dgglsf}. Even more, our proof will follow the lines of their proof.

The first part is a strategy which is called \textit{Kovacic trick} there. For proving the Kovacic trick in the iterative differential setting, we need a lemma corresponding to \cite[Lemma 4.9]{ab-dh-jh:dgglsf}.

\begin{lem}\label{lem:embedding}
Let $S/R$ be an extension of ID-rings with $R$ being ID-simple. Then $S^\theta\otimes_{R^\theta} R$ embeds into $S$.
\end{lem}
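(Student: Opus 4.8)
The statement to prove is that for an extension $S/R$ of ID-rings with $R$ being ID-simple, the natural map $S^\theta \otimes_{R^\theta} R \to S$ is injective. Here $R^\theta$ is a field (a standard fact: in an ID-simple ring, the constants form a field, since for a nonzero constant $c$ the ideal $cR$ is ID-stable, hence all of $R$, so $c$ is a unit, and $c^{-1}$ is again a constant). So $S^\theta \otimes_{R^\theta} R$ makes sense as a tensor product of an $R^\theta$-vector space with the $R^\theta$-algebra $R$. The map in question is the multiplication map $c \otimes r \mapsto c r$ using $S^\theta \subseteq S$ and $R \subseteq S$.

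The strategy I would use is the classical linear-independence argument. Suppose the kernel is nonzero; pick a nonzero element $\xi = \sum_{i=1}^n c_i \otimes r_i$ in the kernel with $n$ minimal among all nonzero kernel elements. By minimality the $c_i \in S^\theta$ are $R^\theta$-linearly independent (otherwise we could shorten the expression without leaving $S^\theta \otimes_{R^\theta} R$), and all $r_i$ are nonzero, and we may normalize so that $r_1 = 1$. Consider the ideal $I = \{ r \in R \mid \exists\, c_2',\dots,c_n' \in S^\theta \text{ with } r\otimes 1 + \sum_{i\geq 2} c_i' \otimes r_i' = 0 \text{ in the kernel, "witnessed with first coordinate } r\text{"}\}$ — more precisely, following the usual Picard–Vessiot argument, I would look at the coefficients appearing. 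The cleanest route: since $\sum c_i r_i = 0$ in $S$ with $r_1 = 1$, apply each $\theta_S^{(k)}$; because the $c_i$ are constant, $\theta_S^{(k)}$ acts only on the $r_i$, and since $r_i \in R$ this gives $\sum_i c_i\, \th{k}(r_i) = 0$ for all $k\geq 0$, with $\th{k}(r_1) = \th{k}(1) = 0$ for $k\geq 1$. Thus for each $k\geq 1$ the relation $\sum_{i\geq 2} c_i\, \th{k}(r_i) = 0$ is a shorter relation among constants, so by minimality of $n$ it must be that $\th{k}(r_i) \in$ ... — one deduces $\th{k}(r_i)$ lies in the $R^\theta$-span pattern forcing, via linear independence of the $c_i$, that all $\th{k}(r_i) = 0$, i.e.\ $r_i \in R^\theta$ for all $i$. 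But then $\xi = \sum c_i \otimes r_i = \sum (c_i r_i) \otimes 1$ with $c_i r_i \in S^\theta$, and $\sum c_i r_i = 0$ in $S$, contradicting $R^\theta$-linear independence of the $c_i$ unless $\xi = 0$.

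I would present this as follows: first record that $R^\theta$ is a field and $S^\theta$ is an $R^\theta$-vector space, so "linear independence over $R^\theta$" is meaningful; then run the minimal-length argument, using that constant elements are annihilated by all $\th{k}$ with $k\geq 1$ and that $\th{k}$ maps $R$ to $R$ so the derived relations stay within $S^\theta \otimes_{R^\theta} R$ with strictly fewer terms; conclude $r_i \in R^\theta$ and derive the contradiction. The ID-simplicity of $R$ is used precisely (and only) to know $R^\theta$ is a field, which is what makes $S^\theta \otimes_{R^\theta} R$ a well-behaved object and makes the "shorten the relation" step legitimate. The main obstacle is bookkeeping in the induction step: one must be careful that after applying $\th{k}$ and invoking minimality, the conclusion is genuinely "$\th{k}(r_i) = 0$ for all $i$" and not merely a relation among the $\th{k}(r_i)$ — this follows because the $c_i$ are $R^\theta$-linearly independent and the coefficients $\th{k}(r_i)$ lie in $R \subseteq S$, but making this airtight requires setting up the minimal counterexample so that the first coordinate is normalized to $1$ (or, equivalently, arguing with the $R$-submodule of $S^\theta$ generated by coefficients of a minimal relation and using flatness/freeness over the field $R^\theta$). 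No step requires anything beyond what is already in the excerpt.
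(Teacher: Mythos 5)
Your overall strategy (minimal relation, apply the $\theta^{(k)}$, shorten, conclude the $R$-coefficients are constants, contradict $R^\theta$-linear independence) is the right elementary argument, but the step ``we may normalize so that $r_1=1$'' is a genuine gap, and it sits exactly where ID-simplicity of $R$ has to do real work. The coefficients $r_i$ lie in the ring $R$, not in a field, so $r_1$ need not be a unit and cannot be scaled to $1$; and your concluding claim that ID-simplicity is used ``precisely (and only)'' to make $R^\theta$ a field is false. If one assumes only that $R^\theta$ is a field, the statement can fail: already for ordinary derivations, $R=\CC[x]$ with derivation $x\,d/dx$ has constants $\CC$ but the stable ideal $(x)$, and it embeds differentially into $S=\CC[x]\times\CC$ (derivation $(x\,d/dx,0)$) via $f\mapsto (f,f(0))$; the new constant $c=(0,1)$ satisfies $cx=0$, so $c\otimes x$ is a nonzero kernel element. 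So without producing a relation whose first $R$-coefficient is $1$, your reduction step has nothing to act on. You do gesture at an ideal $I$ in passing, but with the wrong quantifiers (varying the constants rather than the $R$-coefficients), and then you drop it for the unjustified normalization -- that ideal, set up correctly, is not optional but the heart of the proof.

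The repair is standard: fix the minimal nonzero kernel element $\sum_{i=1}^n c_i\otimes r_i$ with the $c_i\in S^\theta$ linearly independent over $R^\theta$ and all $r_i\neq 0$, and set $\afrak:=\{a\in R\mid \exists\, a_2,\dots,a_n\in R \text{ with } c_1a+\sum_{i\geq 2}c_ia_i=0 \text{ in } S\}$. This is an ideal of $R$, it is stable under every $\theta^{(k)}$ because the $c_i$ are constants of $S$ and the $a_i$ lie in $R$ (so $\theta^{(k)}_S$ restricts to $\theta^{(k)}_R$ on them), and it contains $r_1\neq 0$; ID-simplicity gives $\afrak=R$, hence a relation $c_1+\sum_{i\geq 2}c_ia_i=0$, and from that point your argument (apply $\theta^{(k)}$ for $k\geq 1$, invoke minimality and freeness of $S^\theta\otimes_{R^\theta}R$ over $R$ to get $\theta^{(k)}(a_i)=0$, conclude $a_i\in R^\theta$ and contradict linear independence of the $c_i$) goes through verbatim. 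With this inserted the proof is complete, though note it differs from the paper's: there the kernel is simply observed to be an ID-ideal of the constants extension $S^\theta\otimes_{R^\theta}R$ of the ID-simple ring $R$, and the correspondence between ID-ideals of such an extension and ideals of $S^\theta$ from \cite[Lemma~10.7]{am:gticngg} forces it to vanish; your elementary route in effect reproves the relevant case of that cited lemma by hand.
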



\begin{proof}
We consider the canonical ID-homomorphism $\iota:S^\theta\otimes_{R^\theta} R\longrightarrow S$ given by the embeddings. Its kernel is an ID-ideal of $S^\theta\otimes_{R^\theta} R$. By \cite[Lemma 10.7]{am:gticngg}, the ID-ideals of $S^\theta\otimes_{R^\theta} R$ are in one-to-one correspondence with the ideals of $S^\theta$.\footnote{Finite generation of $S^\theta$ over $R^\theta$ which is assumed there is not needed.} Since, $S^\theta$ is a field this means that $S^\theta\otimes_{R^\theta} R$ is ID-simple and therefore the homomorphism $\iota$ is injective, since it is not zero.
\end{proof}

The Kovacic trick is given in the following proposition.

\begin{prop}\label{prop:kovacic-trick}
Let $(F,\theta)$ be an iterative differential field with field of constants $C:=F^\theta$. Let
$L/F$ be an iterative differential field extension that is finitely generated over $F$, and has the same field of constants $L^\theta=C$. Let $\G$ be an affine algebraic group scheme defined over $C$ with the property that for every $r\in \NN$, $\G^r$ is an iterative differential Galois group over $F$. Then $\G$ is an iterative differential Galois group over $L$.
\end{prop}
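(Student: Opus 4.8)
The plan is to build, from the hypothesis that $\G^r$ is an ID-Galois group over $F$ for a suitable $r$, an ID-module over $F$ whose PV-ring $R$, after base change to $L$, still has $\G$ as (a quotient giving) its Galois group over $L$. The choice of $r$ must be dictated by $L/F$: since $L$ is finitely generated over $F$, there is a finite set of generators, and since $L^\theta = C = F^\theta$ we should take $r$ large enough that $L$ embeds into a PV-ring for $\G^r$ over $F$ — more precisely, one arranges that $L$ is contained in the fraction field of such a PV-ring. The mechanism is \cite[Cor.~4.13]{ab-dh-jh:dgglsf} adapted to the iterative setting: realize $\G^r$ over $F$ by a PV-ring $R$, so that $\Spec R$ is a $\G^r$-torsor and $\trdeg(\mathrm{Frac}(R)/F) = \dim \G^r = r\dim\G$; by enlarging $r$ we make this transcendence degree at least $\trdeg(L/F)$ plus $\dim\G$, with enough room that the generic fibre argument places (a specialization of) $L$ inside $\mathrm{Frac}(R)$.

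The key steps, in order: (1) Using finite generation of $L/F$, fix $r$ with the properties just described and a PV-ring $R$ over $F$ with $\uGal(R/F) \cong \G^r$. (2) Realize that $\G^r = \G^{r-1}\times\G$ and apply Lemma \ref{lem:pv-ring-is-tensor-product}: writing $R_1 = R^{\G^{r-1}\times 1}$ and $R_2 = R^{1\times\G}$ we get $R \cong R_1\otimes_F R_2$, where $R_2$ is a PV-ring over $F$ with Galois group $\G$ and $R_1$ a PV-ring with Galois group $\G^{r-1}$. (3) Show that one may choose the embedding of $L$ into $\mathrm{Frac}(R)$ so that $L$ lands inside $R_1$ (this is where the dimension count enters: $\dim\G^{r-1} = (r-1)\dim\G \ge \trdeg(L/F)$, and the torsor structure lets us move $L$ into the first factor). (4) Now form $S := R_2 \otimes_F L$; this is an ID-ring over $L$ obtained by base change along $F\hookrightarrow L$, and $R_2\otimes_F L$ is the PV-ring over $L$ of the ID-module $L\otimes_F M_2$, where $M_2$ is the ID-module over $F$ whose PV-ring is $R_2$. (5) The crucial point: check $S^\theta = L^\theta = C$, i.e.~no new constants appear after base change. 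Here is where Lemma \ref{lem:embedding} is used — with $R_2$ in the role of the ID-simple ring and $L$ (or rather $R_1$, inside which $L$ sits) providing an auxiliary extension — to force $(\mathrm{Frac}(R))^\theta = C$ to descend, so that $S$ has Galois group exactly $\G$ over $L$.

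The main obstacle I expect is step (3) together with step (5): arranging that $L$ embeds into the "$\G^{r-1}$-part" $R_1$ rather than using up the $\G$-part, and then verifying that $R_2\otimes_F L$ has the right constants. In characteristic zero (\cite{ab-dh-jh:dgglsf}) the no-new-constants check is relatively clean because of the structure of torsors over a field with algebraically closed constants; in the iterative setting one must instead invoke Lemma \ref{lem:embedding}: applying it to the extension $R \supseteq R_2$ (both ID-rings, $R_2$ not ID-simple in general, so one has to be careful about which factor is ID-simple) — or more robustly, observing that $R_1\otimes_F R_2 = R$ has constants $C$, that $L\subseteq \mathrm{Frac}(R_1)$, and that $R_2\otimes_F L \subseteq R_2\otimes_F\mathrm{Frac}(R_1) \subseteq \mathrm{Frac}(R)$, whence $(R_2\otimes_F L)^\theta \subseteq (\mathrm{Frac}(R))^\theta = C$. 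Once the constants are pinned down, the Galois group of $R_2\otimes_F L$ over $L$ is $\G$ by the torsor isomorphism \eqref{eq:torsor-isom} base-changed to $L$, and one identifies the associated ID-module over $L$ as $L\otimes_F M_2$. Finally one should double-check that $\mathrm{Frac}(R_2\otimes_F L)$ is indeed a PV-field over $L$ (minimality), which follows from minimality of $R_2$ over $F$ together with faithful flatness of $L/F$.
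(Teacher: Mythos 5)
There is a genuine gap, and it sits exactly at your steps (3) and (5): you assume that $r$ can be chosen so that $L$ embeds into $\mathrm{Frac}(R)$ for a PV-ring $R$ of $\G^r$ over $F$, and moreover that the embedding can be moved into the factor $R_1$ with group $\G^{r-1}$. Neither claim is justified, and the first is false in general: a finitely generated ID-extension $L/F$ with the same constants need not lie in any Picard--Vessiot extension of $F$ at all, let alone one attached to a power of $\G$. For instance, if $\G$ is a finite reduced group scheme, every PV-ring for $\G^r$ is finite over $F$, while $L$ may have positive transcendence degree; and even when $\dim\G>0$, making $\trdeg(\mathrm{Frac}(R)/F)$ large gives no control whatsoever over \emph{which} extensions of $F$ occur inside $\mathrm{Frac}(R)$. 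The ``torsor structure lets us move $L$ into the first factor'' is likewise not an argument. Since your no-new-constants verification in step (5) rests entirely on $L\subseteq\mathrm{Frac}(R_1)$, the whole proof collapses at this point; the conclusion $(R_2\otimes_F L)^\theta=C$ is precisely the hard part and cannot be obtained this way.

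The actual mechanism of the Kovacic trick is the opposite of embedding $L$ into the PV-field: one realises $\G^{2d+m}$ over $F$ with $m=\trdeg(L/F)+1$ and $d=[L'':F]$ ($L''$ the normal closure of $L\cap F^{\mathrm{alg}}$), decomposes $R\isom R_1\otimes_F\cdots\otimes_F R_{2d+m}$ via Theorem \ref{thm:galois-correspondence} and Lemma \ref{lem:pv-ring-is-tensor-product} (this much matches your step (2)), and then \emph{discards} factors: at most $d$ to make the remaining tensor product linearly disjoint from $L''$ over $F$, and at most $d$ more to kill the finitely many new constants of $\tilde R\otimes_F L''$ --- this is where Lemma \ref{lem:embedding} is really used, to bound $\bigl[(\tilde R\otimes_F L'')^\theta:C\bigr]\le d$. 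One is left with $m$ factors, and the heart of the proof is a pigeonhole count on transcendence degrees: if every $\tilde E_i=\Quot(R_i\otimes_F L)$ acquired a constant transcendental over $C$, then $\trdeg(\tilde E/L)\le m\dim\G-1$, contradicting $\trdeg(\tilde E/L)=m\dim\G$. Hence some index $i$ has $\tilde E_i^\theta=C$, and for that $i$ the ring $R_i\otimes_F L$ is a PV-ring over $L$ with group $\G$. Your proposal is missing both the treatment of the algebraic part of $L/F$ (the $2d$ extra factors) and, more fundamentally, this counting argument, which is what replaces your unjustified embedding of $L$ into the solution field.
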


\begin{proof}
The proof is almost the same as the proof of \cite[Thm.~4.12]{ab-dh-jh:dgglsf}.
We follow their arguments with a slight modification by using the restricted Galois correspondence of Thm.~\ref{thm:galois-correspondence}.

Let $F^{\rm alg}$ denote the maximal algebraic extension of $F$ to which the iterative derivation $\theta$ can be extended
(comp.~Lemma \ref{lem:algebraic-extension}). For an ID-extension $S$ of $F$, we consider the subextension
\[ S':=\{ x\in S \mid x \text{ is algebraic over }F \}. \]
 As there is only one extension of the iterative derivation on $F$ to $F^{\rm alg}$,
every embedding of $S'$ into  $F^{\rm alg}$ is an ID-embedding, and we will also write $S\cap F^{\rm alg}$ instead of $S'$ with an implicit embedding. Also be aware that if $S$ is finitely generated over $F$, then $S'$ is a finite extension of $F$ (see e.g.~\cite[Ch.~5, \S 15, Cor.~1, p.~117]{nb:aiic4-7}).

Now, let $m:=\mathrm{trdeg}(L/F)+1$ and $d:=[L'':F]$ where $L''$ is the normal closure of $L\cap F^{\rm alg}$ in $F^{\rm alg}$ which of course is independent of the chosen embedding of $L'$ into $F^{\rm alg}$.

By assumption on the group scheme $\G$, there is a Picard-Vessiot ring $R$ over $F$ with
$\uGal(R/F)=\G^{2d+m}$. Taking $\H_i\subset \G^{2d+m}$ (for $i=1,\ldots, 2d+m$) as the normal subgroup where the $i$-th factor is omitted, the invariant subring $R_i:=R^{\H_i}$ is a Picard-Vessiot
ring over $F$ with Galois group $\uGal(R_i/F)=\G^{2d+m}/\H_i\cong \G$ (see Thm.~\ref{thm:galois-correspondence}). Furthermore by Lemma \ref{lem:pv-ring-is-tensor-product}, one has
$R=R_1\otimes_F R_2\otimes_F \ldots\otimes_F R_{2d+m}$.

The idea of the proof is to show that there is at least one index $i$ for which $L\otimes_F R_i$ is a Picard-Vessiot ring over $L$ whose Galois group  still is $\G$. Roughly speaking that for this index $i$, the rings $R_i$ and $L$ are ``differentially independent'' over $F$.

In the first step, one removes tensor factors $R_i$ until the remaining tensor product intersect $L''$ in $F$,
after fixing an embedding of  $R'=R'_1\otimes_F R'_2\otimes_F \ldots\otimes_F R'_{2d+m}$ into $F^{\rm alg}$. By \cite[Lemma~4.11]{ab-dh-jh:dgglsf} (which is purely algebraic), one has to remove at most $d$ factors (actually even only $d-1$ factors). So after relabelling, we are left with $\tilde{R}:=R_1\otimes_F R_2\otimes_F \ldots\otimes_F R_{d+m}$, and $\tilde{R}$ is linearly disjoint to $L''$ over $F$.
If the constants of $\tilde{R}\otimes_F L''$ are strictly larger than $C$, we have to remove further tensor factors $R_i$.
By Lemma \ref{lem:embedding}, $(\tilde{R}\otimes_F L'')^\theta\otimes_C \tilde{R}$ embeds into $\tilde{R}\otimes_F L''$, and hence 
\[ \big[ (\tilde{R}\otimes_F L'')^\theta : C\big]=\big[ (\tilde{R}\otimes_F L'')^\theta\otimes_C \tilde{R} :\tilde{R}\big]
\leq \big[\tilde{R}\otimes_F L'' : \tilde{R}\big]=[L'':F]=d. \]
Therefore, as above, the number of factors to remove is at most $d$. Again after relabelling, we are left with $R_1\otimes_F R_2\otimes_F \ldots\otimes_F R_{m}$.

\medskip

In the second step, we rename $R:=R_1\otimes_F R_2\otimes_F \ldots\otimes_F R_{m}$ with Galois group $\uGal(R/F)=\G^{m}$. One then verifies that by the choice of indices, $R$ is linearly disjoint to $L$ over $F$, and hence $R\otimes_F L$ is an integral domain. Further, $C$ is algebraically closed in $\tilde{E}:=\Quot(R\otimes_F L)$.

\medskip

In the third step, one shows that there is at least one index $i$ such that the field of fractions $\tilde{E}_i:=\Quot(R_i\otimes_F L)$ has constants $C$. Then $\tilde{E}_i$ is a Picard-Vessiot extension of $L$ with Picard-Vessiot ring
$R_i\otimes_F L$ and Galois group $\uGal(R_i\otimes_F L/L)=\uGal(R_i/F)=\G$.

For showing the existence of such an index $i$, first recognize that from 
$E=\Quot(R)$ and $\tilde{E}:=\Quot(R\otimes_F L)=\Quot(E\otimes_F L)$ one obtains
\[ \trdeg(\tilde{E}/E)=\trdeg(L/F) \quad \text{and}\quad  \trdeg(\tilde{E}/L)=\trdeg(E/F)=m\dim(\G). \]
Furthermore, any constant in $\tilde{E}$ has to be transcendental over $C$ by the second step, and $\tilde{E}^\theta$ and $L$ are algebraically independent over $C=L^\theta$. For better readability, we let $\tilde{C}:=\tilde{E}^\theta$, and 
$\tilde{C}_i:=\tilde{E}_i^\theta$.

Assume that there does not exist such an index $i$, then $\trdeg(\tilde{C}_i/C)\geq 1$ for all $i$, and hence
\[ \trdeg(\tilde{E}_i\tilde{C}/L\tilde{C})\leq \trdeg(\tilde{E}_i/L\tilde{C}_i)
\leq \trdeg(\tilde{E}_i/L) -1=\dim(\G)-1. \footnote{Composita of fields are always taken inside $\tilde{E}$.} \]

Therefore,
\[ \trdeg(\tilde{E}/L\tilde{C})\leq \sum\nolimits_{i=1}^{m}  \trdeg(\tilde{E}_i\tilde{C}/L\tilde{C})\leq m(\dim(\G)-1)=m\dim(\G)-m. \]
Furthermore,
\begin{eqnarray*} 
\trdeg(L\tilde{C}/L) &=& \trdeg(\tilde{C}/C)= \trdeg(E\tilde{C}/E) \\
&\leq & \trdeg(\tilde{E}/E)= \trdeg(L/F)=m-1, \end{eqnarray*}
and hence,
\[ \trdeg(\tilde{E}/L)=\trdeg(\tilde{E}/L\tilde{C})+ \trdeg(L\tilde{C}/L)\leq m\dim(\G)-1. \]
in contradiction to the equality $\trdeg(\tilde{E}/L)=m\dim(\G)$ from above.
%
%
\end{proof}

We are now able to proof the theorem above.

\begin{proof}[Proof of Theorem \ref{thm:realisation}]
We first assume that $\theta^{(1)}\neq 0$.
In that case by Proposition~\ref{prop:extension-of-id-by-t}, there is $t\in L$ such that $\th{1}(t)\neq 0$, and an iterative derivation $\tilde{\theta}$ on $L$ which  is equivalent to $\theta$ and which extends the iterative higher derivation $\theta_t$ by $t$ on the subfield $C(t)$.

By Theorem \ref{thm:equivalence-of-categories}, the categories of ID-modules over $(L,\theta)$ and over $(L,\tilde{\theta})$ are equivalent, and hence the affine algebraic group schemes that occur as Galois groups over  $(L,\theta)$ are the same as the ones that occur as Galois groups over  $(L,\tilde{\theta})$.

The ID-field $(L,\tilde{\theta})$ is by construction an ID-extension of $(C(t),\theta_t)$, and is finitely generated over $C(t)$ by hypothesis. By  \cite[Thm.~8.8]{bhm-mvdp:cdgt}, all reduced affine algebraic group schemes over $C$ can be realised as Galois group over $(C(t),\theta_t)$. In particular, for any such group scheme $\G$, the group schemes $\G^r$ can be realised for all $r\in \NN$.
Hence by Prop.~\ref{prop:kovacic-trick}, any reduced affine algebraic group scheme $\G$ over $C$ can be realised
over $(L,\tilde{\theta})$, and therefore also over $(L,\theta)$.

Assume now that $\theta^{(1)}=0$. As $L\neq C$, the iterative derivation $\theta$ is non-trivial, and by Lemma \ref{lem:existence-of-d}, there is $d\in \NN$ such that $\theta^{(p^d)}\neq 0$, and for all other non-zero maps $\theta^{(j)}$, the number $j$ is a multiple of $p^d$. 
The iterative derivation therefore has its image in $L\ps{T^{p^d}}$, and we can consider the map
\[  \overline{\theta}:L\to L\ps{U}, x\mapsto \sum_{j=0}^\infty \theta^{(jp^d)}(x) U^j.\]
It is not hard to check that $\overline{\theta}$ is again an iterative derivation. Furthermore, $\overline{\theta}^{(1)}\neq 0$. Hence, by the first part of the proof, any  reduced affine algebraic group scheme $\G$ over $C$ can be realised as the Galois group over $(L,\overline{\theta})$ of some Picard-Vessiot ring $(R,\overline{\theta}_R)$. Then we define $\theta_R:R\to R\ps{T}$ by 
\[  \theta_R(r):=\sum_{j=0}^\infty \overline{\theta}_R^{(j)}(r) T^{jp^d} \]
for all $r\in R$. This gives an iterative differential extension $(R,\theta_R)$ of $(L,\theta)$.
As the components $\th{k}_R$ of the iterative derivation $\theta_R$ are either zero or the same as the components of $\overline{\theta}_R$, it is immediate that $(R,\theta_R)$ even is a Picard-Vessiot ring over  $(L,\theta)$,
and that 
\[ \uGal\bigl( (R,\theta_R)/(L,\theta)\bigr) = \uGal\bigl( (R,\overline{\theta}_R)/(L,\overline{\theta})\bigr)=\G. \qedhere \]
\end{proof}

\def\cprime{$'$}

\vspace*{.5cm}

\parindent0cm

\end{document}